\newtheorem{thm}{Theorem}[section]
\newtheorem{ass}[thm]{Assumption}
\newtheorem{lem}[thm]{Lemma}
\newcommand{\id}{\operatorname{id}}
\newcommand{\loc}{\operatorname{loc}}
\newcommand{\Lip}{\operatorname{Lip}}
\newcommand{\qtext}[1]{\quad \text{#1} \quad}
\newcommand{\qand}{\qtext{and}}
\newcommand{\supp}{\operatorname{supp}}
\newcommand{\Vwall}{V_{\operatorname{wall}}}
\newcommand{\weakto}{ \rightharpoonup }
\newcommand{\xto}[1]{\xrightarrow{#1}}
\newcommand{\xweakto}[1]{ \stackrel{ #1 }{\rightharpoonup} }
\newcommand{\C}{\mathbb C}
\newcommand{\e}{\varepsilon}
\newcommand{\N}{\mathbb N}
\newcommand{\R}{\mathbb R}
\newcommand{\bx}{\mathbf x}
\newcommand{\by}{\mathbf y}
\newcommand{\cA}{\mathcal A}
\newcommand{\cF}{\mathcal F}
\newcommand{\cL}{\mathcal L}
\newcommand{\cM}{\mathcal M}
\newcommand{\cO}{\mathcal O}
\newcommand{\cP}{\mathcal P}
\newcommand{\cS}{\mathcal S}
\newcommand{\oomega}{\overline \omega}
\newcommand{\opsi}{\overline \psi}
\newcommand{\orho}{\overline \rho}
\newcommand{\rhomin}{ \rho_\ast }
\newcommand{\tilderhomin}{\tilde\rho_\ast}
\DeclareFontFamily{U}{mathx}{\hyphenchar\font45}
\DeclareFontShape{U}{mathx}{m}{n}{
      <5> <6> <7> <8> <9> <10>
      <10.95> <12> <14.4> <17.28> <20.74> <24.88>
      mathx10
      }{}
\DeclareSymbolFont{mathx}{U}{mathx}{m}{n}
\DeclareMathAccent{\widecheck}{0}{mathx}{"71}
\begin{document}

\title{Boundary-layer analysis of repelling particles pushed to an impenetrable barrier}

\author{Patrick van Meurs}

\maketitle

\begin{abstract}
This paper considers the equilibrium positions of $n$ particles in one dimension. Two forces act on the particles; a nonlocal repulsive particle-interaction force and an external force which pushes them to an impenetrable barrier. While the continuum limit as $n \to \infty$ is known for a certain class of potentials, numerical simulations show that a discrete boundary layer appears at the impenetrable barrier, i.e.\ the positions of $o(n)$ particles do not fit to the particle density predicted by the continuum limit. In this paper we establish a first-order $\Gamma$-convergence result which guarantees that these $o(n)$ particles converge to a specific continuum boundary-layer profile. 
\end{abstract}

keywords: {
discrete-to-continuum; boundary layers; 
$\Gamma$-convergence;
$\Gamma$-development.
}

% MSC 2020
\textbf{MSC}: {
74Q05, % Homogenization in equilibrium problems of solid mechanics
74G10, % Analytic approximation of solutions (perturbation methods, asymptotic methods, series, etc.) of equilibrium problems in solid mechanics
49J45, % Methods involving semicontinuity and convergence; relaxation (Traditionally the calculus of variations one)
82C22. % Interacting particle systems in time-dependent statistical mechanics [See also 60K35] 
}

%\tableofcontents

\section{Introduction}

This paper contributes to a recent trend in interacting particle systems which aims to find more detailed information on the particle positions at equilibrium than the information which the continuum limit provides. There are roughly two directions which are currently pursued; convergence rates and particle patterns on mesoscopic scales. The studies on convergence rates (see, e.g.\ 
\cite{BraunOrtner20,
EhrlacherOrtnerShapeev16,
HudsonVanMeursPeletier20ArXiv,
KimuraVanMeurs21,
PronzatoZhigljavsky20,
TanakaSugihara19,
VanMeurs18Proc}) 
aim to find a topology in which the distance between the configuration of $n$ particles and the continuum particle density can be measured and bounded by a small value which vanishes as $n \to \infty$. The studies on particle patterns zoom in on a mesoscopic scale, and reveal how the particles are distributed on this scale, either in the bulk 
(see the paper series started in \cite{PetracheSerfaty14, SandierSerfaty15})
or near the end of the support
\cite{GarroniVanMeursPeletierScardia16,
HallChapmanOckendon10,
HallHudsonVanMeurs18,
Hudson13}.
This paper contributes to the latter, in which case we call the particle pattern a \emph{boundary layer}. More precisely, this paper fills the important gap that was left open in  \cite{GarroniVanMeursPeletierScardia16} on the characterization of boundary layers.

\paragraph{The gap in \cite{GarroniVanMeursPeletierScardia16}} To describe the gap in \cite{GarroniVanMeursPeletierScardia16}, we first recall the corresponding setting. Consider $n+1$ many particles ($n \geq 1$) confined to the half-line
\[
  \Omega = [0, \infty).
\]
We label their positions as $\bx := (x_0, x_1, \ldots, x_n)$ and assume that they are ordered, i.e.\ $\bx \in \Omega_n$, where 
\[
  \Omega_n
  := \{ \bx \in \R^{n+1} : 0 = x_0 < x_1 < \ldots < x_n \}.
\]
The discrete (i.e.\ $n < \infty$) particle interaction energy is given by
\begin{equation}
\label{En}
E_n :\Omega_n \to [0,  \infty), \qquad
 E_n (\bx) := \frac{1}{n^2} \sum_{i=1}^n \sum_{j = 0}^{i-1} \gamma_n V ( \gamma_n ( x_i - x_j ) ) + \frac1n \sum_{i = 0}^{n} U(x_i),
\end{equation}
where $V$ is an interaction potential, $U$ is a confining potential and $\gamma_n > 0$ is a parameter. The double sum accounts for each pair of two particles. Figure \ref{fig:VU} illustrates typical examples for $V$ and $U$. The assumptions and properties of $V$ and $U$ are roughly as follows. $U \in C^1(\Omega)$ with $\min_\Omega U = 0$ and $U(x) \to \infty$ as $x \to \infty$. $V \in L^1(\R)$ is normalized to $\int_\R V = 1$, nonnegative, even and singular at $0$ with the bound 
\begin{equation} \label{V:sing:UB:intro}  
  V(x) \leq C \left\{ \begin{array}{ll}
    |x|^{-a}
    &\text{if } a > 0  \\
    -\log|x|
    &\text{if } a = 0
  \end{array} \right.
  \qquad \text{for all } x \in (0, \tfrac12)
\end{equation}
for some $C > 0$, where $a \in [0, 1)$ is a parameter which bounds the strength of the singularity. We also assume that $V|_{(0,\infty)}$ is non-increasing and convex. We state the precise list of assumptions and resulting properties in Section \ref{s:UV}. With respect to \cite{GarroniVanMeursPeletierScardia16}, we put more assumptions on $V$, but allow for a general confining potential $U$ instead of the specific choice $U(x) = x$. This generalization of $U$ does not result in further complications. Instead, it clarifies the dependence of $U$ on the boundary layer.

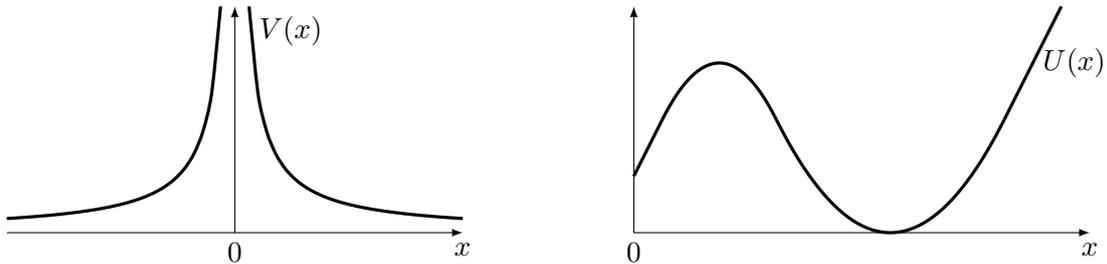
\begin{figure}[h]
\centering
\begin{tikzpicture}[scale=1.5, >= latex]    
\def \w {2}

\begin{scope}[scale = .5]        
\draw[->] (-4,0) --++ (8,0) node[below] {$x$};
\draw[->] (0,0) node[below]{$0$} -- (0,4);

%\draw (2,0) node[below]{$\beta$} --++ (0,.5);
%\draw[thick, red] (4,0) -- (0,1);
%\draw[domain=0:2, smooth, thick, blue] plot (\x,{ (\x - 2)^2/10 + 1 - \x/4 });
%\draw[domain=2:4, smooth, thick, blue] plot (\x,{1/\x});

\draw[domain=.25:4, smooth, very thick] plot (\x,{1/\x});
\draw[domain=-4:-.25, smooth, very thick] plot (\x,{-1/\x});

\draw (.25, 4) node[anchor = north west]{$V(x)$};
\end{scope} 

\begin{scope}[shift={(3.5,0)}, xscale=.5, yscale=.25]
%\fill[black!10!white] (0,2) --++ (0,2) --++ (.5,0) -- cycle;
%\fill[domain=2.5:6.5, smooth, black!10!white] plot (\x,{ (\x-4.5)^2 });

\draw[->] (0,0) node[below]{$0$} -- (0,8);
\draw[->] (0,0) -- (8,0) node[below] {$x$};

%\draw (0,4) node[left]{$C_U$} --++ (.5,0);
%\draw (2.5,4) --++ (4,0);

\draw[very thick] (0,2) -- (.5,4);
\draw[domain=.5:2.5, smooth, very thick] plot (\x,{ 6 - 2*(\x-1.5)^2 });
\draw[domain=2.5:6.5, smooth, very thick] plot (\x,{ (\x-4.5)^2 });
\draw[very thick] (6.5,4) -- (7.5,8);
\draw (7,6) node[right]{$U(x)$};
\end{scope} 
\end{tikzpicture} \\
\caption{Typical examples of $V$ and $U$.}
\label{fig:VU}
\end{figure} 

One particular choice of $V$ which we have in mind is
\begin{equation} \label{Vwall}
  \Vwall (x) = x \coth x - \log| 2 \sinh x |.
\end{equation}
For this potential, $E_n$ is a model for the pile-up of dislocation walls at a lock. We refer to \cite{GarroniVanMeursPeletierScardia16} for the discussion of this model in the literature and its physical relevance. We show in Section \ref{s:UV} that it satisfies all the assumptions that we put on $V$.

The asymptotic behaviour of the parameter $\gamma_n$ in \eqref{En} as $n \to \infty$ plays a decisive role for the limiting energy $E$ of $E_n$ as $n \to \infty$. This can be expected from \eqref{En} by noting that the scaled potential 
\begin{equation} \label{Vgam}
  V_\gamma (x) = \gamma V(\gamma x),
\end{equation}
which has unit integral for all $\gamma > 0$, is squeezed to a delta-peak at $0$ as $\gamma \to \infty$. Hence, as $\gamma$ increases, the particle interactions become more localized. In \cite{GeersPeerlingsPeletierScardia13} the $\Gamma$-limit of $E_n$ is obtained as $n \to \infty$. Depending on the asymptotic behavior of $\gamma_n$, five different limiting energies are obtained: two of these belong to the critical scaling regimes $\gamma_n \to \gamma > 0$ and $\gamma_n / n \to \Gamma > 0$ as $n \to \infty$, and the other three belong to the three regimes separated by these two critical regimes (the outer two regimes require a rescaling of $E_n$ and $\bx$; we refer to \cite{GeersPeerlingsPeletierScardia13} for the details).

In this paper we focus on the regime in between the two critical ones, i.e.
\begin{equation} \label{gamman:L3}
  \lim_{n \to \infty} \gamma_n = \infty
  \qand
  \lim_{n \to \infty} \frac{ \gamma_n }n = 0.
\end{equation}
In this regime the $\Gamma$-limit of $E_n$ (see \cite[Thm.\ 7]{GeersPeerlingsPeletierScardia13}) is given by 
\begin{equation}\label{E}
E : \cP(\Omega) \to [0,\infty], \qquad 
E(\mu) = \frac12 \| \mu \|_{L^2(\Omega)}^2 + \int_\Omega U(x) \, d\mu(x),
\end{equation}
where $\cP(\Omega)$ is the space of probability measures on $\Omega$. The $L^2$-norm is extended to measures (see \eqref{L2:norm:extended} for details) and may be infinite. It is well-known (see, e.g.\ \cite[Thm.\ 2.1]{KinderlehrerStampacchia80} with minor modications to account for $U \notin L^2(\Omega)$) that the minimization problem of $E$ over $\cP(\Omega)$ has a unique minimizer $\mu_* \in L^2(\Omega) \cap \cP(\Omega)$ and that its density $\rhomin$ is characterized by
\begin{equation} \label{EL}
\left\{ \begin{aligned}
  \rhomin + U &\geq C_U \quad \text{on } \Omega \\  
  \rhomin + U &= C_U \quad \text{on } \supp \rhomin, \\ 
\end{aligned} \right. 
\end{equation}
where the constant $C_U > 0$ is such that $\int_\Omega \rhomin(x) \, dx = 1$. Obviously,
\begin{equation} \label{rhoa}
  \rhomin = [C_U - U]^+.
\end{equation}
Figure \ref{fig:U} illustrates $\rhomin$.

\begin{figure}[h]
\centering
\begin{tikzpicture}[scale=1.5, >= latex]    
\def \w {2}

\begin{scope}[xscale=.5, yscale=.25]
\fill[black!10!white] (0,2) --++ (0,2) --++ (.5,0) -- cycle;
\fill[domain=2.5:6.5, smooth, black!10!white] plot (\x,{ (\x-4.5)^2 });

\draw[->] (0,0) node[below]{$0$} -- (0,8);
\draw[->] (0,0) -- (8,0) node[below] {$x$};

\draw (0,4) node[left]{$C_U$} --++ (.5,0);
\draw (2.5,4) --++ (4,0);
\draw[->] (3.5,1) --++ (0,3) node[midway, right]{$\rho_*$};

\draw[thick] (0,2) -- (.5,4);
\draw[domain=.5:2.5, smooth, thick] plot (\x,{ 6 - 2*(\x-1.5)^2 });
\draw[domain=2.5:6.5, smooth, thick] plot (\x,{ (\x-4.5)^2 });
\draw[thick] (6.5,4) -- (7.5,8);
\draw (7,6) node[right]{$U(x)$};
\end{scope} 
\end{tikzpicture} \\
\caption{Geometrical interpretation of $\rho_*$ in \eqref{rhoa}. $\rho_*$ is the height function of the region (colored in gray) with unit area below $C_U$ and above $U$.}
\label{fig:U}
\end{figure}
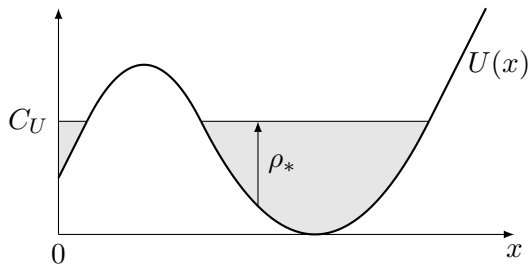 

As pointed out in \cite{GarroniVanMeursPeletierScardia16}, the $\Gamma$-convergence of $E_n$ is not completely satisfactory, because it does not detect any particle patterns on mesoscopic scales. For the scaling regime in \eqref{gamman:L3}, the numerical computations of the minimizer $\bx_*$ in \cite{GarroniVanMeursPeletierScardia16} indicate that $O(n / \gamma_n)$ particles are not distributed according to $\rho_*$; see Figure \ref{fig:GvMPS}. The main result \cite[Thm.\ 1.1]{GarroniVanMeursPeletierScardia16} captures the continuum boundary-layer profile according to which these $O(n / \gamma_n)$ particles are distributed. 
This profile is obtained by firstly proving a first-order $\Gamma$-convergence result for the continuous counterpart of $E_n$ (see \eqref{Egam} below) and by secondly minimizing the first-order $\Gamma$-limit. While Figure \ref{fig:GvMPS} suggests strongly that the obtained boundary-layer profile accurately describes the discrete boundary layer, any proof for this observation was left open. This is the gap in \cite{GarroniVanMeursPeletierScardia16} which we aim to fill in this paper.

\begin{figure}[h]
\centering
\includegraphics[scale=1.2]{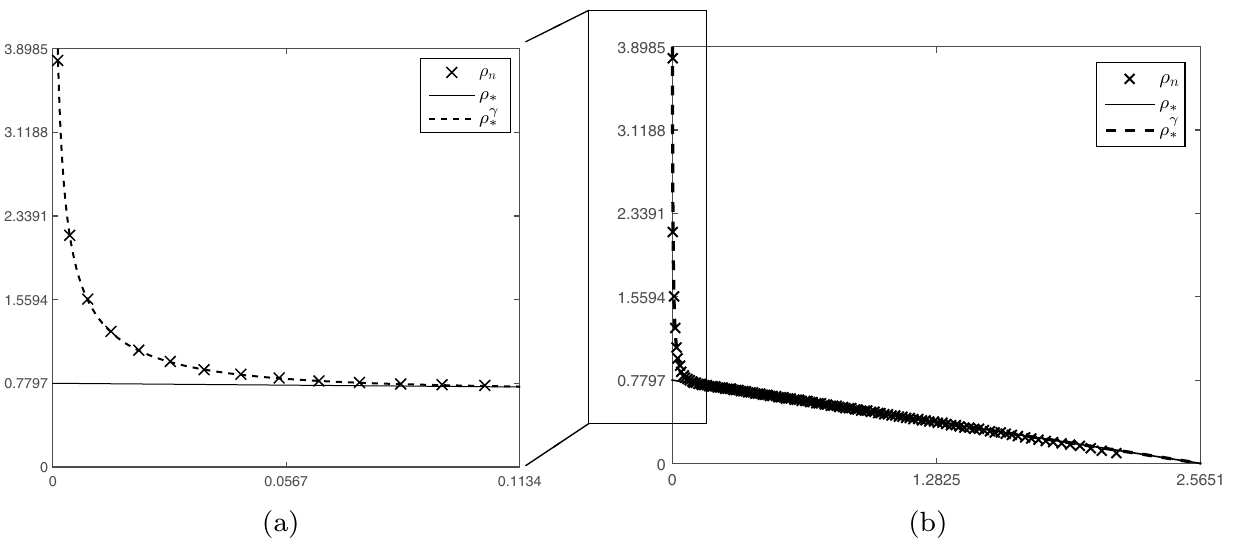}
\caption{This is a copy of \cite[Fig.\ 4]{GarroniVanMeursPeletierScardia16}; copyright by World Scientific Publishing Co., Inc. The Figure illustrates the minimizers $\bx_*$ and $\rho_*$ of respectively $E_n$ and $E$ for the potentials $V$ as in \eqref{Vwall} and $U(x) = x$. (a) is obtained from (b) by zooming in with the scaling operator $(\gamma_n)_\to$ defined in \eqref{scalingOperator}. The crosses $\rho_n$ illustrate the discrete density profile of $\bx_*$. The $x$- and $y$-coordinates of these crosses are respectively $x_{*,i}$ and $\frac2n (x_{*,i+1} - x_{*,i-1})^{-1}$. $\rho_*^{\gamma_n}$ is the continuum boundary-layer profile; see \eqref{nua}. }
\label{fig:GvMPS}
\end{figure} 

\paragraph{The first-order $\Gamma$-convergence result of \cite{GarroniVanMeursPeletierScardia16}} 
In order to describe this paper's first-order $\Gamma$-convergence result which will fill the gap in \cite{GarroniVanMeursPeletierScardia16}, we first recall that of \cite{GarroniVanMeursPeletierScardia16}.
By \cite[Thm.\ 5]{GeersPeerlingsPeletierScardia13} the $\Gamma$-limit of $E_n$ in the regime $\gamma_n \to \gamma > 0$ is given by
\begin{equation}\label{Egam} 
E^\gamma : \cP(\Omega) \to [0,\infty], \qquad
E^\gamma (\mu):= \frac12  \int_\Omega \int_\Omega V_\gamma(x-y) \, d\mu(y)d\mu(x) + \int_\Omega U(x) \, d\mu(x),
\end{equation}
where $V_\gamma$ is defined in \eqref{Vgam}.
The energy $E^\gamma$ is the continuous counterpart of $E_n$ which is considered in \cite{GarroniVanMeursPeletierScardia16}. It $\Gamma$-converges to $E$ as $\gamma \to \infty$ (see \cite[Thm.\ 2.1]{GarroniVanMeursPeletierScardia16}). In particular, this means that there exists a sequence $(\mu^\gamma)_\gamma$ such that
\[
  E^\gamma(\mu^\gamma) - E(\rho_*) = o(1)
  \qquad \text{as } \gamma \to \infty.
\] 
The idea of the authors of \cite{GarroniVanMeursPeletierScardia16} to upgrade this to a first-order $\Gamma$-convergence result was to characterize the $o(1)$-term. They predicted from a priori computations that this term is $O(1/\gamma)$, and that it is easier to replace $E(\rho_*)$ by $E^\gamma(\rho_*)$. This motivated them to consider the functional
\[
  F^\gamma (\mu) := \gamma \big( E^\gamma(\mu) - E^\gamma(\rho_*) \big).
\]
They call $F^\gamma$ the boundary-layer energy. The first-order $\Gamma$-convergence of $E^\gamma$ is simply the (zeroth-order) $\Gamma$-convergence of $F^\gamma$.

For the $\Gamma$-convergence of $F^\gamma$ the topology needed to be chosen carefully. From the formal asymptotics in \cite{Hall11} and their own numerical simulations the authors guessed that the width of the boundary layer is $O(1/\gamma)$. This motivated them to use the following spatial rescaling. For a measure $\mu \in \cP(\Omega)$, let 
\begin{equation}
\label{scalingOperator}
\tilde \mu := \gamma_\to \mu := \gamma \; (\gamma \id)_\#\mu.
\end{equation} 
The inverse scaling is given by
\begin{equation*}
%\label{scalingOperator:inv}
\mu := \gamma_\leftarrow \tilde \mu := \frac1{\gamma} \Big(\frac1{\gamma} \id \Big)_\# \tilde  \mu.
\end{equation*} 
Note that if $\mu$ has a density $\rho$, then the density of $\tilde \mu$ satisfies
\[
  \tilde \rho(x) = \rho(x/\gamma) =: \gamma_\to \rho(x) .
\]
Using this scaling, the authors of \cite{GarroniVanMeursPeletierScardia16} employed the following change of variables:
\begin{equation*} %\label{nu}
\nu^\gamma := \tilde \mu - \tilde \rho_*,
\qquad \mu = \gamma_\leftarrow \nu^\gamma + \rhomin.
\end{equation*}
By subtracting $\rho_*$ the bulk behaviour gets separated from the boundary layer. 

For the signed Radon measures $\nu^\gamma$ with total variation that growths linearly with $\gamma$, the authors used the vague topology. This topology is defined as follows on the space $\cM(\Omega)$ of signed Radon measures on $\Omega$. A sequence $(\nu_\e)_{\e > 0} \subset \cM(\Omega)$ converges to $\nu \in \cM(\Omega)$ vaguely (denoted by $\nu_\e \xweakto v \nu$) as $\e \to 0$ if
\[
  \int_\Omega \varphi \, d\nu_\e
  \xto{\e \to 0} \int_\Omega \varphi \, d\nu
  \qquad \text{for all } \varphi \in C_c(\Omega).
\]

The main result \cite[Thm.\ 1.1]{GarroniVanMeursPeletierScardia16} states that $F^\gamma$ $\Gamma$-converges with respect to the vague topology to a certain limiting boundary-layer energy $F$. This functional $F$ is defined on
\begin{equation} \label{A}
    \mathcal A 
    = \Big\{ \nu \in \cM(\Omega) \mid 
	     \nu^- (dx) \leq \rhomin (0) dx, \ 
	     \sup_{x \geq 0 } \nu^+([x,x+1]) < \infty 
	   \Big\},
\end{equation}
where $\nu^+, \nu^- \geq 0$ are respectively the positive and negative part of $\nu$ such that $\nu = \nu^+ - \nu^-$. While $\nu \in \cA$ may have infinite total variation, we have that $\nu^- \in L^\infty(\Omega)$ and that the local bound on $\nu^+$ is translation invariant. For $\nu \in \cA \cap L^2(\Omega)$,
\begin{equation}
\label{F:formal}
  F(\nu) := 
  \frac12 \int_\Omega \int_\Omega V(x-y) \nu(y) \nu(x) \, d ydx - \rhomin(0) \int_{-\infty}^0 (V * \nu)(x) \, dx.
\end{equation}
It is not obvious to extend this definition to $\nu \in \cA$, because $\nu$ need not be of finite total variation. We recall this extension briefly in Section \ref{s:func:F}. 
Finally, \cite{GarroniVanMeursPeletierScardia16} noted that $F$ has a unique minimizer $\nu_*$ (existence follows from the $\Gamma$-convergence result in \cite{GarroniVanMeursPeletierScardia16} and uniqueness follows from the convexity of $\cA$ and the strict convexity of $F$), and that the continuous boundary-layer profile is given by 
\begin{equation} \label{nua}
  \tilde \rho_*^\gamma := \nu_* + \tilde \rho_*,\qquad 
  \rho_*^\gamma := \gamma_\leftarrow \nu_* + \rho_*.
\end{equation}
Figure \ref{fig:GvMPS} and all other numerical simulations performed in \cite{GarroniVanMeursPeletierScardia16} suggest that $\rho_*^\gamma$ gives a very good prediction for both the bulk and the boundary layer in the minimizer $\bx_*$.
\smallskip

However, the match between $\bx_*$ and $\rho_*^\gamma$ has only been observed and has not been proven. Hence, there is no guarantee that such a match extrapolates to any other choices for the potentials $U$ and $V$ and for the parameter $\gamma_n$. This motivates our aim to establish a first-order $\Gamma$-convergence result for the discrete energy $E_n$ instead of its continuous counterpart $E^{\gamma_n}$.

\paragraph{First-order $\Gamma$-convergence result of $E_n$} 
To establish a first-order $\Gamma$-convergence result for $E_n$, we follow largely the same setup as the one just described. In fact, from Figure \ref{fig:GvMPS} we expect the same limiting boundary-layer energy $F$. Also, there is a close connection between $E_n$ and $E^{\gamma_n}$, which can be seen as follows. Given $\bx \in \Omega_n$, consider the corresponding empirical measure 
\begin{equation}\label{mun}
  \mu_n 
  := \frac1n \sum_{i=0}^n \delta_{x_i}
  \in \frac{n+1}n \cP(\Omega).
\end{equation} 
Then, we can express $E_n(\bx)$ in terms of $\mu_n$ as
\begin{equation}\label{En:mun}
  E_n (\mu_n) := \frac{1}{2} \iint_{\Delta^c} V_{\gamma_n} ( x - y ) \, d\mu_n(y) d\mu_n(x) + \int_{\Omega} U(x) \, d\mu_n(x),
\end{equation} 
where the diagonal
\[
  \Delta = \{ (x,x)^T : x \in \R \} \subset \R^2
\]
is removed from the integration domain to avoid self-interactions. Apart from removing the diagonal, the expressions for $E_n$ and $E^{\gamma_n}$ are the same. Yet, the removal of the diagonal and the difference in the admissible sets on which $E_n$ and $E^{\gamma_n}$ are defined are crucial. Indeed, $(x,y) \mapsto V_{\gamma_n}(x-y)$ concentrates around the diagonal as $n \to \infty$ and thus careful analysis is required.

Following the procedure from \cite{GarroniVanMeursPeletierScardia16}, we consider the blown-up energy difference $\gamma_n [ E_n(\mu_n) - E^{\gamma_n}(\rho_*) ]$ and employ the following change of variables. For $\mu_n$ as in \eqref{mun}, we set
\begin{equation}
\label{nun}
\nu_n := \tilde \mu_n - \tilde \rho_*,
\qquad \mu_n = (\gamma_n)_\leftarrow \nu_n + \rhomin.
\end{equation}
Then, the discrete boundary-layer energy $F_n$ is defined on the admissible set
\begin{equation}
\label{An}
\cA_n
:= 
\bigg\{\nu_n \in \mathcal M(\Omega)
\mid 
\nu_n^- = \tilde \rho_*, \ \exists \, \by \in \Omega_n : \nu_n^+ = \frac{\gamma_n}n \sum_{i=0}^n \delta_{y_i}
 \bigg\}
\end{equation} 
and given by
\begin{equation} \label{Fn}
  F_n(\nu_n) := \gamma_n \big[ E_n \big( (\gamma_n)_\leftarrow \nu_n + \rhomin \big) + E^{\gamma_n}(\rhomin) \big].
\end{equation}
Note that if $\nu_n$ is constructed from $\mu_n$ by \eqref{nun}, then
\begin{equation} \label{Fn:1}
  F_n(\nu_n) = \gamma_n \left(E_n(\mu_n) - E^{\gamma_n}(\rhomin)\right).
\end{equation} 

The main result of this paper in the following $\Gamma$-convergence result of $F_n$:

\begin{thm}\label{t}
Let $U$ and $V$ satisfy Assumptions \ref{a:U} and \ref{a:V}, and let $a \in [0,1)$ be such that \eqref{V:sing:UB:intro} holds. If 
\begin{equation*} %\label{ass:t}
  1 \ll 
  \gamma_n 
  \ll \left\{ \begin{aligned}
    & n^{\tfrac{1-a}{2-a}}
    && \text{if } 0 < a < 1  \\
    & \sqrt{ \frac{ n}{\log n} }
    && \text{if } a = 0,
  \end{aligned} \right.
\end{equation*}
then any sequence $(\nu_n)_{n=1}^\infty$ with $\nu_n \in \cA_n$ and $\sup_{n \geq 1} F_n(\nu_n) < \infty$ is pre-compact in $\cA$ in the vague topology. Moreover, the functionals $F_n$ $\Gamma$-converge to $F$ with respect to the vague topology, i.e.
\begin{align*}
  &\forall \, \nu \in \cA \
  \forall \, \nu_n \in \cA_n \text{ with } \nu_n \xweakto v \nu : &
  \liminf_{n \to \infty} F_n(\nu_n) &\geq F(\nu) \\
  &\forall \, \nu \in \cA \
  \exists \, \nu_n \in \cA_n \text{ with } \nu_n \xweakto v \nu : &
  \limsup_{n \to \infty} F_n(\nu_n) &\leq F(\nu).
\end{align*}
\end{thm}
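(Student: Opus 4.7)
My plan is to reduce the claim to the $\Gamma$-convergence $F^\gamma \to F$ established in \cite{GarroniVanMeursPeletierScardia16} by quantitatively comparing the discrete energy $F_n$ with the continuum boundary-layer functional $F^{\gamma_n}$ evaluated on a mollified version of the empirical measure $\mu_n$. The guiding identity is \eqref{Fn:1}, which expresses $F_n(\nu_n)$ as $\gamma_n$ times an energy difference of the same shape as the one defining $F^{\gamma_n}$; only the diagonal exclusion in \eqref{En:mun} and the atomic nature of $\mu_n$ separate them.

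For each $\nu_n \in \cA_n$ with empirical measure $\mu_n$ as in \eqref{mun}, I introduce a mollified measure $\hat \mu_n := \mu_n * \eta_{\delta_n}$, where $\eta$ is a fixed nonnegative smooth mollifier and $\delta_n$ is chosen in the window $\gamma_n/n \ll \delta_n \ll 1/\gamma_n$. Let $\hat\nu_n$ denote the corresponding element of $\cA$ obtained from $\hat\mu_n$ by the change of variables \eqref{nun}. Since the mollification width $\delta_n$ is much smaller than the rescaling length $1/\gamma_n$, we have $\hat\nu_n \xweakto{v} \nu$ if and only if $\nu_n \xweakto{v} \nu$, so the vague topology lifts unchanged. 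The central technical lemma I would establish is
\begin{equation*}
\bigl| F_n(\nu_n) - F^{\gamma_n}(\hat\nu_n) \bigr| \xto{n \to \infty} 0
\end{equation*}
uniformly over sequences with $\sup_n F_n(\nu_n) < \infty$. Unwinding the definitions, this difference decomposes into (a) the self-interaction removed by the diagonal exclusion in $E_n$, (b) the mollification error on the off-diagonal pair interactions, and (c) the confinement discrepancy $\int U\,d(\mu_n - \hat\mu_n)$, which is $O(\delta_n)$ by local Lipschitz-ness of $U$. Contribution (a) is of order $\gamma_n (V_{\gamma_n} * \eta_{\delta_n} * \eta_{\delta_n})(0)/n = O(\gamma_n/(n\delta_n))$ once $\delta_n \gg 1/\gamma_n$. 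Contribution (b) is estimated by bounding, via \eqref{V:sing:UB:intro}, the near-diagonal sum $\frac{\gamma_n}{n^2}\sum_{i\ne j}|V_{\gamma_n}(x_i-x_j)-(V_{\gamma_n}*\eta_{\delta_n}*\eta_{\delta_n})(x_i-x_j)|$ separately over bulk pairs (with spacing of order $1/n$) and boundary-layer pairs (with spacing of order $\gamma_n/n$); this turns out to be $o(1)$ precisely under the hypothesis $\gamma_n \ll n^{(1-a)/(2-a)}$, with the logarithmic variant when $a=0$.

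Granted the lemma, compactness reduces to the known compactness for $F^{\gamma_n}$-bounded sequences applied to $\hat\nu_n$; the structural constraints built into $\cA_n$, namely $\nu_n^- = \tilde\rho_*$ and $\nu_n^+$ atomic with uniform atoms of mass $\gamma_n/n$, force any vague cluster point to lie in $\cA$. The liminf inequality then follows by applying the liminf half of $F^{\gamma_n} \to F$ to $\hat\nu_n$ and invoking the lemma. For the recovery sequence, given $\nu \in \cA$ I take a continuum recovery sequence $\nu^{\gamma_n}$ for the $\Gamma$-convergence of \cite[Thm.\ 1.1]{GarroniVanMeursPeletierScardia16} and discretize its associated $\mu^{\gamma_n}$ by quantile placement, setting $x_{0,n} := 0$ and $x_{i,n} := \inf\{x \ge 0 : \mu^{\gamma_n}([0,x]) \ge i/(n+1)\}$ for $i \ge 1$; a symmetric form of the lemma yields $F_n(\nu_n) \to F(\nu)$.

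The main obstacle I foresee is the pair-counting in step (b) of the lemma. The inter-particle spacing is spatially inhomogeneous: it is of order $1/n$ in the bulk but shrinks to order $\gamma_n/n$ inside the boundary layer, where $O(n/\gamma_n)$ extra particles accumulate. One must therefore derive a uniform local-density bound on $\mu_n$ directly from the $F_n$-bound, then feed it into \eqref{V:sing:UB:intro} and sum carefully across the two regimes; the upper bound on $\gamma_n$ in the hypothesis is precisely what makes the resulting error of lower order than the $1/\gamma_n$ scale that $F_n$ resolves. A secondary point for the recovery construction is that the forced boundary atom $x_{0,n} = 0$ may require a mild modification of $\mu^{\gamma_n}$ at the origin, chosen so as not to affect $F(\nu)$ in the limit.
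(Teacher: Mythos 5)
Your proposal takes a genuinely different route from the paper: you reduce the discrete $\Gamma$-convergence to the continuum result $F^\gamma \to F$ of \cite{GarroniVanMeursPeletierScardia16} by a quantitative comparison, via mollification of the empirical measure. The paper instead proves Theorem~\ref{t} directly: it regularizes the \emph{potential} (not the measure) via the tangent-parabola construction $V^\beta$ of \eqref{Vbeta}, works with the operator $T^\beta$, and uses the continuum result only through the density of nice profiles in $\cA$ when building the recovery sequence. The recovery sequence itself is constructed by quantile placement in both approaches, and in both the exponent $\tfrac{1-a}{2-a}$ arises from balancing a near-diagonal error against a regularization-fidelity error, so your high-level intuitions are on target.

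The central difficulty in your plan is that the comparison lemma $|F_n(\nu_n) - F^{\gamma_n}(\hat\nu_n)| \to 0$ is circular as stated. Estimating the off-diagonal mollification error in your step~(b) needs a uniform local-density bound of the sharp quality $\sup_x \nu_n^+([x,x+1]) \leq C$ at the rescaled scale, i.e.\ $O(n/\gamma_n)$ particles per window of width $1/\gamma_n$. You acknowledge that this bound must be derived from the $F_n$-bound, but your only mechanism for extracting quantitative information from $\sup_n F_n(\nu_n)<\infty$ is the lemma itself. The crude consequence $E_n(\mu_n) \leq C$ yields only $\nu_n^+([x,x+1]) \lesssim \sqrt{\gamma_n}$, which is far too weak for your pair-counting. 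The paper escapes this circularity through sign-definiteness: $0 \leq V^\beta \leq V$ and $W^\beta = V - V^\beta \geq 0$ with $\supp W^\beta \subset [-\beta,\beta]$ (Lemma~\ref{l:Vbeta}) imply that in the expansion \eqref{pf:4} every position-dependent error term either has a favourable sign or is controlled by the total mass alone; this gives the a priori bound $\|T^{\beta_n}\nu_n\|_2 \leq C$ in \eqref{pf:6} with \emph{no} density input, after which Lemma~\ref{l:nun:bds} delivers the sharp local-density bound. Your mollified potential $V_{\gamma_n} * \eta_{\delta_n} * \eta_{\delta_n}$ has no such sign structure: by Jensen it exceeds $V_{\gamma_n}$ away from the singularity and falls below it near the singularity, so neither of the two contributions can be dropped by sign, and both require the density bound you cannot yet assume.

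Two further concrete gaps. First, mollifying $\mu_n$ pushes mass to $x<0$: the forced atom at $x_0 = 0$ and the $O(n/\gamma_n)$ particles clustered near it are smeared across the boundary, so $\hat\mu_n$ is not supported in $\Omega$ and $\hat\nu_n \notin \cA$; since the boundary layer sits exactly at $0$, this is not a negligible tail effect and would need explicit treatment before \cite[Thm.~1.1]{GarroniVanMeursPeletierScardia16} can be invoked. Second, the size $O(\gamma_n/(n\delta_n))$ you assign to the self-interaction term~(a) presumes $\delta_n \gg 1/\gamma_n$, which contradicts the regime $\delta_n \ll 1/\gamma_n$ that you impose to preserve the vague topology after rescaling; in the actual regime the diagonal term is of size $O(\gamma_n^{2-a}/(n\delta_n^a))$, and balancing it against the off-diagonal Taylor error points to $\delta_n \sim 1/n$, not $\delta_n \gg \gamma_n/n$. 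With that correction the exponents do balance out to the theorem's threshold $\gamma_n \ll n^{(1-a)/(2-a)}$, but only after the local-density bound is in hand --- which brings one back to the circularity above.
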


More precisely, the assumption on $\gamma_n$ is equivalent to 
\[
  \lim_{n \to \infty} \gamma_n = \infty
  \qand
  \left\{ \begin{aligned}
    \lim_{n \to \infty} \gamma_n n^{-\tfrac{1-a}{2-a}} &= 0
    && \text{if } 0 < a < 1  \\
    \lim_{n \to \infty} \gamma_n \sqrt{ \frac{\log n}n } &= 0 
    && \text{if } a = 0.
  \end{aligned} \right.
\]

The proof of Theorem \ref{t} is given in Section \ref{s:t:pf} with preliminaries in Sections \ref{s:Fn} and \ref{s:func}. It follows the proof in \cite{GarroniVanMeursPeletierScardia16} with major modifications to allow for the discreteness. Here, we briefly describe the main features of Theorem \ref{t} and focus in particular on these major modifications.

% thm t on itself
First, we recall from \cite{GarroniVanMeursPeletierScardia16} that the expression for $F$ in \eqref{F:formal} arises naturally when the right-hand side in \eqref{Fn} is explicitly expressed in terms of $\nu_n$. In Section \ref{s:Fn} we redo this computation, which in our case deals with the discrete setting and with a general confining potential $U$. 

The main difficulty with respect to \cite{GarroniVanMeursPeletierScardia16} is that the diagonal $\Delta$ is removed from the integration domain (see \eqref{En:mun}) and that the domain of $F_n$ is discrete (i.e.\ the degrees of freedom are empirical measures). To deal with this, we use essentially the particular regularization $V^\beta$ of $V$ constructed in \cite{KimuraVanMeurs21}, which approximates $V$ from below as $\beta \to 0$. Using this regularization, we add and subtract the contribution of the diagonal. By adding the diagonal, we can apply similar arguments as those in \cite{GarroniVanMeursPeletierScardia16} to establish the liminf inequality. However, $\beta$ needs to be chosen carefully. If $\beta$ is too small, then the contribution of the diagonal is too large and may not vanish in the limit. On the other hand, if $\beta$ is too large, then we cannot control the error made by the replacement of $V$ by $V^\beta$. Balancing out these two errors results in the asymptotic upper bound on $\gamma_n$ in Theorem \ref{t}. This bound is a stronger requirement than in \eqref{gamman:L3}, which is sufficient for the (zeroth-order) $\Gamma$-convergence of $E_n$ to $E$.

Establishing the limsup inequality is also significantly more challenging than in \cite{GarroniVanMeursPeletierScardia16}. The discreteness of $\cA_n$ forces us to discretize $\nu$, which was not necessary in the continuous setting in \cite{GarroniVanMeursPeletierScardia16}. Since we blow up the energy difference by the factor $\gamma_n$, we need to show that the discretization error is asymptotically smaller than $1/\gamma_n$. This is much more intricate than for the zeroth-order $\Gamma$-limit of $E_n$ (see \cite[Thm.\ 7]{GeersPeerlingsPeletierScardia13}), where it was sufficient to show that the discretization error simply vanishes as $n \to \infty$. 

\paragraph{Discussion}
% in comp with GvMPS
In conclusion, Theorem \ref{t} extends its continuous counterpart \cite[Thm.\ 1.1]{GarroniVanMeursPeletierScardia16} (i.e.\ the $\Gamma$-convergence of $F^\gamma$ to $F$) in two manners. First, on a minor note, it allows for a general confining potential $U$. This highlights the fact that the dependence of $F$ on $U$ is restricted to the single value $\rhomin(0) \geq 0$, which depends nonlocally on $U$ (see \eqref{rhoa} and Figure \ref{fig:U}). 
Second, on a major note, Theorem \ref{t} considers the discrete energy $F_n$. As a consequence of Theorem \ref{t}, any sequence of minimizers $\nu_{*,n}$ of $F_n$ converges to $\nu_*$. Since this convergence happens on the mesoscopic scale of the boundary layer, this proves that $\rho_*^{\gamma_n}$ (see \eqref{nua}) indeed describes the boundary layer which appears in $\bx_*$.
This gives the first theoretical motivation for the observations in Figure \ref{fig:GvMPS} and any other numerical computation in \cite{GarroniVanMeursPeletierScardia16} that fits to the regime of $\gamma_n$ assumed in Theorem \ref{t}. This fills the main gap that was left open in \cite{GarroniVanMeursPeletierScardia16}. 

Yet, the story is not complete; \cite{GarroniVanMeursPeletierScardia16} contains a number of conjectures sparked by numerical simulations to which Theorem \ref{t} does not provide an answer. Here, we focus on the main limitation of Theorem \ref{t}, which is the upper bound on $\gamma_n$. Indeed, the numerical simulations in \cite{GarroniVanMeursPeletierScardia16} suggest that $\rho_*^{\gamma_n}$ is the correct boundary-layer profile for the whole regime of $\gamma_n$ in \eqref{gamman:L3}. However, \cite[Table 1]{GarroniVanMeursPeletierScardia16} suggests that the anticipated scaling of the energy difference, i.e.\ $E_n(\bx_*) - E^{\gamma_n}(\rhomin) \sim 1/\gamma_n$, ceases to hold at the upper bound on $\gamma_n$ in Theorem \ref{t}. Hence, this upper bound is not simply an artefact of our proof. Looking deeper into the proof in Section \ref{s:t:pf}, it seems that this upper bound is caused by the contribution to $F_n$ from a narrow region around the diagonal in the double integral in \eqref{En:mun}. A more precise treatment of this diagonal region could perhaps reveal a contribution of the right-hand side in \eqref{Fn:1} which diverges to $\infty$ as $n \to \infty$. Specifying this contribution, subtracting it from $F_n$ and proving $\Gamma$-convergence of the resulting energy functional (provided that his is possible) would reveal that $\rho_*^{\gamma_n}$ remains the correct boundary layer profile beyond the upper bound on $\gamma_n$ in Theorem \ref{t}. Pursuing this direction is beyond our scope.

\paragraph{Organization of the paper}
In Section \ref{s:not} we set the notation. In Section \ref{s:UV} we state the precise assumptions on the potentials $V$ and $U$, and derive further properties that follow from these assumptions. In Section \ref{s:Fn} we rewrite $F_n$ defined in \eqref{Fn} explicitly in terms of $\nu_n$, which will clarify the connection with the expression for $F$ in \eqref{F:formal}. In Section \ref{s:func} we build the functional setting on which our proof of Theorem \ref{t} relies. We also provide several a priori estimates. Finally, Section \ref{s:t:pf} is devoted to the proof of Theorem \ref{t}.

\section{Notation}
\label{s:not}

Here we list some symbols and abbreviations that we use throughout the paper.

\begin{longtable}{lll}
$a$ & smallest constant such that $V(x) \leq C |x|^{-a}$ & As.\ \ref{a:V}\ref{a:V:sing} \\
$\cA_n$, $\cA$ & admissible sets for $F_n$ and $F$ & \eqref{An}, \eqref{A}\\
$\beta$ & regularization parameter for $V$ and $T$ & \eqref{Vbeta} \\
$\gamma_n$ & modelling parameter & Thm.\ \ref{t} \\
$\gamma_\to\mu$, $\gamma_\leftarrow\mu$ & transforms of $\mu$ by scaling space by $\gamma>0$ & \eqref{scalingOperator}\\
$E_n$ & discrete energy & \eqref{En}\\
$E^\gamma$ & $\Gamma$-limit of $E_n$ for $\gamma_n = \gamma$ & \eqref{Egam}\\
$E$ & $\Gamma$-limit of $E_n$ for $1 \ll \gamma_n \ll n$ & \eqref{E}\\
$\widehat f$, $\cF f$ & Fourier transform of $f$; \\
& $(\cF f)(\omega) = \widehat f (\omega) := \int_\R f(x) e^{-2\pi ix\omega} \, dx$ \\
$\cF^{-1} f$ & inverse Fourier transform of $f$;\\
$F_n$ & discrete boundary-layer energy & \eqref{Fn}, \eqref{Fn:full:form} \\
$F$ & continuum boundary-layer energy & \eqref{F:formal}, \eqref{F} \\
$\cL$ & the Lebesgue measure on $\Omega$; $\cL \in \mathcal{M}(\Omega)$ \\
$\mathcal{M}(\Omega)$ & signed Radon measures on $\R$ with support in $\Omega$\\
$\nu^+, \nu^-$ & positive and negative part of a \\ 
& measure $\nu \in \mathcal{M}(\Omega)$; $
\nu^\pm\geq0$ \\
$\mathcal{P}(\Omega)$ & $\mathcal{P}(\Omega) \subset \mathcal{M}(\Omega)$ is the set of probability measures \\
$\rhomin$ & minimizer of $E$ & \eqref{rhoa}\\
$\tilderhomin$ & rescaled version; $\tilderhomin(x) := (\gamma_n)_\to \rho_* (x) = \rho_*(x / \gamma_n)$  \\
$T$ & `convolutional square root' of $V$; $T^2 f = V \ast f$ & \eqref{T}, Lem.\ \ref{l:T} \\
$X_{k,j}$ & Hilbert space; $X_{k,j} \subset L^2(\R)$ & \eqref{Xkj} \\
$\|\cdot\|_p$ &$L^p(\R)$-norm; $1\leq p \leq \infty$.
\end{longtable}

We reserve $c, C > 0$ for generic constants which do not depend on any of the relevant variables. We use $C$ in upper bounds (and think of it as possibly large) and $c$ in lower bounds (and think of it as possibly small). While $c, C$ may vary from line to line, in the same display they refer to the same value. If different constants appear in the same display, we denote them by $C, C', C'', \ldots$.

To avoid clutter, we often omit the integration variable. For instance, we use
\begin{align*}
  \int_\Omega U \, d\rho &:= \int_\Omega U(x) \, d\rho(x) \\
  \int_\R V &:= \int_\R V(x) \, dx \\
  (V * \nu_n)(x) &:= \int_\Omega V(x - y) \, d\nu_n(y),
\end{align*}
and extrapolate this notation to other integrands. Other than the framework of measures, we will also work with distributions. To connect the two notions, we often interpret measures on $\Omega$ as distributions on $\R$ supported in $\Omega$. 

\section{The potentials $U$ and $V$}
\label{s:UV}

To the potential $U$ we add one more assumption to those mentioned in the introduction. We recall that $C_U$ is the constant in \eqref{EL}; see also Figure \ref{fig:U}.

\begin{ass} \label{a:U}
$U \in C^1(\Omega)$ satisfies $\min_\Omega U = 0$ and $U(x) \to \infty$ as $x \to \infty$. Moreover, there exist finitely many disjoint closed intervals $I_1, \ldots, I_m$ such that
\begin{equation}\label{U:supp:assn}
  \supp [C_U - U]^+ = \bigcup_{i=1}^m I_i.
\end{equation}
\end{ass}

The assumption $\min_\Omega U = 0$ is not restrictive, as otherwise one can achieve this by adding a constant to $E_n$. The assumption \eqref{U:supp:assn} is technical; it excludes pathological cases in which the graph of $U$ crosses the value $C_U$ infinitely many times. In fact, for the choice $U(x) = x$ in \cite{GarroniVanMeursPeletierScardia16}, \eqref{U:supp:assn} holds for $m=1$. For $U$ as in Figure \ref{fig:U}, \eqref{U:supp:assn} holds for $m=2$.

In view of \eqref{rhoa}, Assumption \ref{a:U} directly translates to assumptions on $\rhomin$, independent of the assumptions on $V$. Indeed, from \eqref{rhoa} it is clear that Assumption \ref{a:U} implies that $\rhomin$ is Lipschitz continuous, and that $\rho_*'$ is uniformly continuous on $\Omega \setminus \partial (\supp \rho_*)$. Hence, \eqref{U:supp:assn} implies that only at finitely many points $\rho_*$ is not of class $C^1$.
\smallskip

Next we turn to the potential $V$:

\begin{ass} \label{a:V}
$V \in C(\R \setminus \{0\})$ satisfies
\begin{enumerate}[label=(\roman*)]
\item (Evenness). $V:\R\to\R$ is even;
\item \label{a:V:sing} (Singularity). $V(x) \to \infty$ as $x \to 0$, and there exist $C > 0$ and $a \in [0,1)$ such that for all $x \in (0, \frac12]$
\[
  V(x) \leq C \left\{ \begin{array}{ll}
    |x|^{-a}
    &\text{if } a > 0  \\
    -\log|x|
    &\text{if } a = 0;
  \end{array} \right.
\]
\item \label{a:V:cv} (Convexity). $V$ is convex on $(0,\infty)$ and $\lambda$-convex near $x = 0$, i.e. 
\[
  \exists \ \lambda, \delta > 0 :
  x \mapsto V(x) - \frac\lambda2 x^2
  \text{ is convex on } (0, \delta) \text ;
\]
\item \label{a:V:int} (Integrability). $V$ is normalized to $\|V\|_1 = 1$ and has bounded first moment, i.e.\ 
\[
 \int_{\R} |x| V(x) \, dx < \infty;
\]
\item \label{a:V:reg} (Regularity). $V \in W_{\loc}^{2,1}(0,\infty)$ and  $\sqrt{ \widehat{V} } \in W^{2,\infty} (\R)$. 
\end{enumerate}
\end{ass}

First, we mention several properties of $V$ which follow from Assumption \ref{a:V}. The evenness, convexity and integrability imply that $V \geq 0$ is non-increasing on $(0,\infty)$ and that $\widehat V$ is real-valued, nonnegative and even, which is sufficient for Assumption \ref{a:V}\ref{a:V:reg} to be well-defined. A less obvious consequence is Lemma \ref{l:Vhat:LB}.
\begin{lem} \label{l:Vhat:LB}
There exists a constant $c > 0$ such that for all $\omega \in \R$
\[
  v(\omega) := \widehat{V}(\omega) \geq c \min \{ 1, |\omega|^{-2} \}.  
\]
\end{lem}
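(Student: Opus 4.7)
The plan is to exploit Assumption \ref{a:V}\ref{a:V:cv} by writing $V$ as a superposition of elementary convex building blocks whose Fourier transforms are manifestly non-negative, and then turn the quantitative lower bound of $\lambda$-convexity into a quantitative lower bound on $\widehat V$.

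\textbf{Step 1 (convex representation of $V$).} Since $V|_{(0,\infty)}$ is convex, non-increasing, and $V, -V' \to 0$ at infinity (the latter because $V$ is monotone and integrable, which forces the monotone function $-V'$ to tend to a non-negative limit, which must vanish), the non-negative Radon measure $\sigma := V''$ on $(0,\infty)$ satisfies $-V'(y) = \sigma([y,\infty))$ for $y > 0$. Integrating once more and using Fubini I obtain, for all $x \in \R$,
\[
  V(x) \;=\; \int_0^\infty (z - |x|)^+ \, d\sigma(z).
\]
Tonelli in $\int_\R V = 1$ yields the normalization $\int_0^\infty z^2 \, d\sigma(z) = 1$, so $\sigma$ is finite in this weighted sense and Fubini will be legitimate below.

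\textbf{Step 2 (Fourier transform and large-$|\omega|$ lower bound).} The tent function $f_z(x) := (z-|x|)^+$ is the self-convolution of $\mathbf 1_{[-z/2,z/2]}$, hence $\widehat{f_z}(\omega) = \sin^2(\pi z \omega)/(\pi\omega)^2$. Applying Fubini (justified by $\int z^2 d\sigma < \infty$) to the representation from Step~1 gives
\[
  v(\omega) \;=\; \int_0^\infty \frac{\sin^2(\pi z \omega)}{\pi^2 \omega^2} \, d\sigma(z),
\]
which already makes $v \geq 0$ transparent. Assumption \ref{a:V}\ref{a:V:cv} upgrades this to $\sigma \geq \lambda \, dz$ on $(0,\delta)$, so
\[
  v(\omega) \;\geq\; \frac{\lambda}{\pi^2 \omega^2} \int_0^\delta \sin^2(\pi z \omega) \, dz \;=\; \frac{\lambda}{\pi^2 \omega^2}\left(\frac{\delta}{2} - \frac{\sin(2\pi \delta \omega)}{4\pi \omega}\right).
\]
For $|\omega| \geq \omega_1 := 1/(\pi\delta)$ the bracket is bounded below by $\delta/4$, producing the desired bound of the form $c/\omega^2$ with $c = \lambda\delta/(4\pi^2)$.

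\textbf{Step 3 (compact range and conclusion).} For $|\omega| \leq \omega_1$ I invoke continuity of $v$ (from $V\in L^1$) together with strict positivity: the integrand in the representation of $v$ above is strictly positive on a set of positive $\sigma$-measure (again by the density bound $\sigma \geq \lambda \, dz$ on $(0,\delta)$), so $v > 0$ on the compact interval $[-\omega_1, \omega_1]$ and is thus bounded below by some $c_0 > 0$. Choosing $c := \min(c_0, \lambda\delta/(4\pi^2))$ gives $v(\omega) \geq c \min\{1, |\omega|^{-2}\}$ on all of $\R$. The main obstacle is really Step~1: representing the singular, convex $V$ as $V = \int f_z \, d\sigma$ with $\int z^2 d\sigma$ finite, since it is this representation that converts the two ingredients of Assumption \ref{a:V}\ref{a:V:cv} (ordinary convexity on $(0,\infty)$ and the quantitative $\lambda$-convexity near $0$) into, respectively, the non-negativity and the $1/\omega^2$ lower bound on $\widehat V$.
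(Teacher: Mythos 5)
Your proof is correct and takes a genuinely different, more self-contained route than the paper. The paper starts from a specific representation of $\widehat V$ in terms of $V''$ borrowed from \cite[(A.3)]{KimuraVanMeurs20DOI}, namely an infinite series over translated dyadic blocks, and then estimates it directly; you instead derive from scratch the classical representation of an even, convex, integrable, non-increasing potential as a superposition of tent functions,
\[
  V(x) = \int_0^\infty (z-|x|)^+\, d\sigma(z), \qquad \sigma = V''|_{(0,\infty)},
\]
whose Fourier transform is manifestly the non-negative mixture $v(\omega) = \int_0^\infty \sin^2(\pi z\omega)/(\pi\omega)^2\, d\sigma(z)$. Both proofs then exploit the same decisive ingredient, the quantitative bound $V'' \geq \lambda$ on $(0,\delta)$ coming from Assumption~\ref{a:V}\ref{a:V:cv}, and both split into a large-$\omega$ regime (explicit $1/\omega^2$ bound) and a compact regime (strict positivity plus continuity). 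What your version buys is self-containment and transparency: the non-negativity of $\widehat V$ is visible at a glance, and the Fubini justification reduces to $\int z^2\, d\sigma = \|V\|_1 = 1 < \infty$, so no external formula needs to be invoked. The one place to be a bit careful, which you handle correctly, is that the tent representation requires knowing $V \to 0$ and $V' \to 0$ at infinity; both follow from monotonicity, convexity, non-negativity, and integrability exactly as you argue.
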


\begin{proof}
Since $v$ is even and real-valued, it is enough to focus on $\omega > 0$. \cite[(A.3)]{KimuraVanMeurs20DOI} provides the characterization
\[
  v (\omega)
  = \frac1{\pi \omega^3} \sum_{k=0}^\infty \int_0^{\frac12} \bigg( \int_0^{\frac12} V'' \Big( \frac{k + x + y}\omega \Big) dy \bigg) \sin(2\pi x) \, dx.
\]
Since the integrand is nonnegative, we may bound it from below by shrinking the integration domain. Then, on $0 \leq x \leq \frac14$, we bound $\sin(2 \pi x) \geq 4 x$. For $V''$ we note from Assumption \ref{a:V}\ref{a:V:cv} that 
\[
  V''(z) \geq \lambda 1(z < \delta)
  \qquad \text{for all } z > 0,
\]
where $1(P)$ equals $1$ if the statement $P$ is true and $0$ otherwise. Then,
\[
  \widehat V (\omega)
  \geq \frac c{\omega^3} \sum_{k=0}^\infty \int_0^{\frac14} \bigg( \int_0^{\frac14 - x} 1 \big( (k + x + y) < \delta \omega \big) dy \bigg) x \, dx.
\]

We split two cases. If $\omega \leq \frac1{4 \delta}$, then only the term corresponding to $k = 0$ is nonzero, and the right-hand side equals
\begin{align*}
  \frac c{\omega^3} \int_0^{\delta \omega} \int_0^{\delta \omega - x}  x dy  dx
  \geq \delta^3 c'.
\end{align*}
If $\omega > \frac1{4 \delta}$, then we estimate
\begin{align*}
  \widehat V (\omega)
  &\geq \frac c{\omega^3} \sum_{k=0}^\infty \int_0^{\frac14} \bigg( \int_0^{\frac14 - x} 1 \big( (k + \tfrac14) < \delta \omega \big) dy \bigg) x \, dx \\
  &= \frac{c'}{\omega^3} \sum_{k=0}^\infty 1 \big( (k + \tfrac14) < \delta \omega \big)
  = \frac{c'}{\omega^3} \lceil \delta \omega - \tfrac14 \rceil
  \geq \frac45 \delta \frac{c'}{\omega^2}.
\end{align*}
\end{proof}

Next we compare Assumption \ref{a:V} to the assumptions on $V$ made in \cite{GarroniVanMeursPeletierScardia16}, which are weaker. Indeed, in \cite{GarroniVanMeursPeletierScardia16} Assumption \ref{a:V}\ref{a:V:sing} and the regularity on $V$ are not required, and Assumption \ref{a:V}\ref{a:V:cv} is relaxed to the requirement that $V|_{(0,\infty)}$ is non-increasing. While \cite{GarroniVanMeursPeletierScardia16} has a further assumption that $V$ can be approximated from below by a certain class of functions, we show that this holds under Assumption \ref{a:V} by constructing such an approximation explicitly; see \eqref{Vbeta}.

Next we motivate the assumptions which are new with respect to \cite{GarroniVanMeursPeletierScardia16}. We believe that these additional assumptions are minor, and still allow for most of the potentials in practice which satisfy the assumptions in \cite{GarroniVanMeursPeletierScardia16}.
Regarding Assumption \ref{a:V}\ref{a:V:sing}, it is obvious from the bound on $\gamma_n$ in Theorem \ref{t} that a bound on the singularity of $V$ is needed. The requirement $V(x) \to \infty$ as $x \to 0$ might not be necessary. However, including this case in the proof would require a further case splitting. Since we are not aware of any application for this case, we omit it. While the convexity in Assumption \ref{a:V}\ref{a:V:cv} is new, it captures the following three assumptions in \cite{GarroniVanMeursPeletierScardia16}: $\widehat V > 0$ (see the proof of Lemma \ref{l:Vhat:LB}), $V|_{(0,\infty)}$ is non-increasing, and $V$ can be approximated from below by a special class of functions. The local $\lambda$-convexity is a technical addition which simplifies several steps in the proof of Theorem \ref{t}. Finally, in Assumption \ref{a:V}\ref{a:V:reg}, the regularity on $V$ is only a small upgrade of $V \in W_{\loc}^{1,\infty}(0,\infty)$, which follows from convexity. The regularity of $\sqrt v$ is required in \cite{GarroniVanMeursPeletierScardia16} to extend $F$ from $L^2(\Omega)$ to $\cA$. We further exploit this assumption when proving properties of the regularization $V^\beta$. This is the single assumption which can be hard to check in practice.

Finally, we show that $\Vwall$ defined in \eqref{Vwall} satisfies Assumption \ref{a:V}. We recall from \cite{GeersPeerlingsPeletierScardia13} that $\Vwall$ is strictly convex, has a logarithmic singularity (in particular, $a=0$) and exponential tails. Then, the only non-trivial property left to check is the regularity of $\sqrt v$ with $v := \cF \Vwall$. By the strict convexity and the exponential tails of $\Vwall$, it follows that $v \in C^\infty(\R)$ is positive, and thus $\sqrt v \in W_{\loc}^{2, \infty}(\R)$. To extend this to large $\omega$, we recall from \cite[App.\ A.1]{GeersPeerlingsPeletierScardia13} that
\[
  v(\omega) 
  = \frac1{2\omega} \Big( \coth - \frac{\id}{\sinh^2} \Big)(\pi^2 \omega)
  =: \frac{ \varphi(\omega) }\omega.
\]
Note that $\varphi(\omega) \to 1$ and $\varphi'(\omega), \varphi''(\omega) \to 0$ as $\omega \to \infty$. Then, we obtain $\sqrt v \in W^{2, \infty}(\R)$ from
\begin{align*}
  \big( \sqrt v \big)'(\omega)
  &= \frac{ \big( \sqrt \varphi \big)'(\omega) }{\omega^{1/2}} - \frac{ \sqrt \varphi (\omega) }{2 \omega^{3/2}} 
  \xto{ \omega \to \infty } 0 \\
  \big( \sqrt v \big)''(\omega)
  &= \frac{ \big( \sqrt \varphi \big)''(\omega) }{\omega^{1/2}} 
  - \frac{ \big( \sqrt \varphi \big)'(\omega) }{\omega^{3/2}}
  + \frac{ 3 \sqrt \varphi (\omega) }{4 \omega^{5/2}}
  \xto{ \omega \to \infty } 0.
\end{align*}

\section{Explicit expression of $F_n(\nu_n)$} 
\label{s:Fn}

Here we derive an explicit expression for $F_n(\nu_n)$ in terms of $\nu_n$ and motivate the prefactor $\gamma_n$ in \eqref{Fn}. By scaling back, note from \eqref{nun} that $\nu_n \in \cA_n$ can be written as
\[
  \sigma_n 
  := (\gamma_n)_\leftarrow \nu_n
  = \mu_n - \rhomin
\]
for some empirical measure $\mu_n$ of the form \eqref{mun}. 
Then, in view of the right-hand side in \eqref{Fn:1}, we set $V_\gamma(x) := \gamma V(\gamma x)$ and compute
\begin{align*} \notag
  &E_n(\mu_n) - E^{\gamma_n}(\rhomin) \\\notag
  &= \frac12 \iint_{\Delta^c} V_{\gamma_n} ( x - y ) \, d\mu_n(y) d\mu_n(x) 
     - \frac12 \iint_{\Delta^c} V_{\gamma_n} ( x - y ) \, d \rhomin(y) d\rhomin(x) 
     + \int_\Omega U \, d \sigma_n \\ %\label{tFn:1}
  &= \frac{1}{2} \iint_{\Delta^c} V_{\gamma_n} ( x - y ) \, d \sigma_n(y) d \sigma_n(x)
     + \int_\Omega \big[ ( V_{\gamma_n} * \rhomin ) + U \big]\, d \sigma_n,
\end{align*}
where we recall that $\rho_*$ and $\mu_n$ are extended from $\Omega$ to $\R$ by $0$.

Next we rewrite the second term.
With this aim, we set
\begin{equation} \label{orhoa}
  \orho_*(x) := \left\{ \begin{aligned}
    &\rhomin(0)
    && \text{if } x < 0 \\
    &\rhomin(x)
    && \text{if } x \geq 0
  \end{aligned} \right.
\end{equation}
and expand
\begin{multline} \label{pf:Fn:1}
  \int_\Omega \big[ ( V_{\gamma_n} * \rhomin ) + U \big]\, d \sigma_n \\
  = \int_\Omega \big( V_{\gamma_n} * (\rhomin - \orho_*) \big) \, d \sigma_n
    + \int_\Omega \big( (V_{\gamma_n} * \orho_*) - \orho_* \big) \, d \sigma_n
    + \int_\Omega \big( \rhomin + U \big) \, d \sigma_n.
\end{multline}
The first term equals
\begin{equation} \label{force:rewrite:comp}
  - \rhomin(0) \int_\Omega \int_{-\infty}^0 V_{\gamma_n} (x-y) \, dy \, d \sigma_n(x)
  = - \rhomin(0) \int_{-\infty}^0 (V_{\gamma_n} * \sigma_n)(y) \, dy.
\end{equation}
For the integrand of the third term in \eqref{pf:Fn:1}, we note from \eqref{rhoa} that 
\[
  \int_\Omega \big( \rhomin + U \big) \, d \sigma_n
  = \int_\Omega C_U \, d \sigma_n
    + \int_{(\supp \rhomin)^c} (U - C_U) \, d \sigma_n
  =  \frac{C_U}n + \int_{(\supp \rhomin)^c} (U - C_U) \, d \mu_n.
\]

Collecting our computations, we obtain
\begin{multline*}
  E_n(\mu_n) - E^{\gamma_n}(\rhomin)
  = \frac12 \iint_{\Delta^c} V_{\gamma_n} ( x - y ) \, d \sigma_n(y) d \sigma_n(x)
    - \rhomin(0) \int_{-\infty}^0 (V_{\gamma_n} * \sigma_n)(x) \, dx \\
    + \int_\Omega (V_{\gamma_n} - \delta_0) * \orho_* \, d \sigma_n
    + \int_{(\supp \rhomin)^c} (U - C_U) \, d \mu_n
    + \frac{C_U}n.
\end{multline*}
Multiplying by $\gamma_n$ and changing variables (recall $\nu_n = (\gamma_n)_\to \sigma_n$), we get
\begin{multline} \label{Fn:full:form}
  F_n(\nu_n)
  = \frac{1}{2} \iint_{\Delta^c} V ( x - y ) \, d \nu_n(y) d \nu_n(x)
    - \rhomin(0) \int_{-\infty}^0 (V * \nu_n)(x) \, dx \\
    + \int_\Omega \gamma_n (V_{\gamma_n} - \delta_0) * \orho_* \, d \sigma_n
    + \int_{(\supp \tilde \rho_*)^c} \big( U(x/\gamma_n) - C_U \big) \, d \nu_n^+(x)
    +  C_U \frac{\gamma_n}n. 
\end{multline}
For later use we note that the integral in the second term can be rewritten as (recall \eqref{force:rewrite:comp})
\begin{equation} \label{force:rewrite}
  \int_{-\infty}^0 (V * \nu_n)(y) \, dy
  = \int_\Omega \int_x^\infty V (y) \, dy \, d \nu_n(x).
\end{equation}

Note that the first two terms in \eqref{Fn:full:form} resemble the expression of $F$ in \eqref{F}. This motivates the scaling of the energy difference in \eqref{Fn} by $\gamma_n$. We treat the latter three terms in \eqref{Fn:full:form} as error terms when proving Theorem \ref{t}. While the third term obviously vanishes as $n \to \infty$, the other two terms may not for certain sequences $(\nu_n)_n$. We rely on the fact that the second term is nonnegative, and that the integrand in the first term is expected to be small because $V_\gamma \xweakto v \delta_0$ as $\gamma \to \infty$. We give a precise bound later in Lemma \ref{l:Vgam:err:term}. 

\section{Functional setting and preliminaries} 
\label{s:func}

In Section \ref{s:func:F} we recall from \cite{GarroniVanMeursPeletierScardia16} the necessary functional framework to extend the definition of $F$ in \eqref{F:formal} to $\cA$. Since this functional framework also facilitates the statements and proofs of several preliminary estimates, we treat them in the subsequent Section \ref{s:func:Fn}. In this functional framework we identify measures on $\Omega$ as tempered distributions on $\R$ supported in $\Omega$.

\subsection{Proper definition of $F$}
\label{s:func:F}

Since $F$ is the same as in \cite{GarroniVanMeursPeletierScardia16}, we briefly recall the extension of the definition in \eqref{F:formal} on $L^2(\R)$ to $\cA$. Ideally, if there exists a function $u$ such that $V = u*u$, then for $f \in L^2(\R)$ we have for the interaction term that
\begin{equation} \label{pf:37}
  \int_\R (V*f) f
  = \int_\R (u * f)^2 = \| u * f \|_2^2.
\end{equation}
This expression can be extended to distributions by noting that
\begin{equation} \label{L2:norm:extended}  
  \| \xi \|_2^2
  := \sup_{ \varphi \in C_c^\infty(\R) } \big( 2 \langle \xi, \varphi \rangle - \| \varphi \|_2^2 \big) \in [0,\infty].
\end{equation}
However, from Assumption \ref{a:V} it is not clear whether such a function $u$ exists.

One way to avoid characterizing $u$ is to work in Fourier space. Since convolution transforms into multiplication by the Fourier transform, the linear operation of convolving by $u$ turns into multiplication by $\sqrt{ \cF V }$, which is a function due to Assumption \ref{a:V}\ref{a:V:reg}. Precisely, we set
\begin{equation} \label{T}
  T : L^2(\R) \to L^2(\R), \qquad
  T f 
  := \cF^{-1} ( \sqrt{ v } \widehat f ),
\end{equation}
where
$
  v = \cF V
$. Then, by translation to Fourier space, we observe that \eqref{pf:37} turns into
\begin{align} \label{Vff:eq:TfL2}
  \int_\R (V*f) f
  = \| Tf \|_2^2.
\end{align}
Together with the observation in \eqref{force:rewrite} this yields 
\begin{align} \label{F}
  F(\nu) = \frac12 \| T\nu \|_2^2 - \rhomin(0) \int_\Omega g \, d\nu
\end{align}
for all $\nu \in \cA \cap L^2(\R)$, where 
\begin{align} \label{g}
  g(x) := \int_x^\infty V(y)\, dy
  \qquad \text{for all } x \geq 0.
\end{align}

To extend $F$ to $\cA$, we show that the linear term in \eqref{F} is bounded and that the operator $T$ can be extended to $\cA$. We do this in Lemmas \ref{l:T} and \ref{l:fg:nu}. For later use, we state these lemmas in a general form. With this aim, we introduce the Hilbert spaces
\begin{align*} %\label{for:defn:Xkj:C}
  X_{k,j} (\C) &:= \big\{ f \in H^k(\R; \C) : x^{j} f(x) \in L^2(\R; \C) \big\}  \\
  ( f, \phi )_{X_{k,j}} &:= \sum_{\ell = 0}^k \int_\R f^{(\ell)} \overline{\phi^{(\ell)}} + \int_\R x^{2j} f (x) \overline{\phi (x)} \, dx
\end{align*}
for all  $k,j \in \N$.
In case the functions are real-valued, we set
\begin{equation} \label{Xkj}
  X_{k,j} := \Big\{ f \in H^k(\R) : x^{j} f(x) \in L^2(\R) \Big\}.
\end{equation}
Note that $L^2 (\R) = X_{0,0} \supset X_{k,j} \supset \mathcal S (\R)$ for all $k,j \in \N$, where $\mathcal S (\R)$ is the space of Schwarz functions. Note from the Fourier transform property
\[
  \cF( x^j f^{(k)} ) = i^{k + j} (2\pi)^{k-j} \big( \omega^k \widehat f \big)^{(j)}
\]
that $\cF$ is an invertible bounded linear operator from $X_{k,j} (\C)$ to $X_{j,k} (\C)$. %p.117.mid
We further set $X_{k,j}'$ as the dual of $X_{k,j}$ with respect to the $L^2$-topology, and  $\cL(X_{k,j})$ as the space of all bounded linear operators from $X_{k,j}$ to $X_{k,j}$.

\begin{lem}[{\cite[Lem.\ A.2]{GarroniVanMeursPeletierScardia16}}] \label{l:T}
The operator $T$ and the set $\cA$ satisfy
\begin{enumerate}[label=(\roman*)]
  \item \label{l:T:on:A} $\cA \subset X_{1,2}'$; 
  \item \label{l:T:SBLO} For each $k \in \N$ and each $\ell \in \{0,1,2\}$, $T \in \cL (X_{k,j})$ is symmetric, and can be extended to $T \in \cL (X_{k,j}')$ by
  \begin{equation*}%\label{operator:T:on:Xp}
\langle T\xi, f \rangle := \langle \xi, Tf \rangle 
\qquad \text{for all } \xi \in X_{k,j}', \ f \in X_{k,j}.
\end{equation*}
\end{enumerate} 
\end{lem}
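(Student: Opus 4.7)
Since the statement is drawn verbatim from \cite[Lem.\ A.2]{GarroniVanMeursPeletierScardia16}, one could simply cite that reference; here is the route I would take to reproduce the proof. Parts (i) and (ii) are essentially independent and can be handled separately.

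For part (i), I would decompose $\nu = \nu^+ - \nu^-$ and bound each piece as a functional on $X_{1,2}$. For $\nu^-$, the pointwise density bound $\nu^- \leq \rhomin(0)\,\cL$ reduces the task to estimating $\|f\|_1$, which is controlled by a weighted Cauchy--Schwarz: $\|f\|_1 \leq \|(1+x^2)^{-1}\|_2 \, \|(1+x^2) f\|_2 \leq C \|f\|_{X_{1,2}}$. For $\nu^+$, setting $\alpha := \sup_{x \geq 0} \nu^+([x,x+1]) < \infty$, we have $|\langle \nu^+, f \rangle| \leq \alpha \sum_{k\geq 0} \|f\|_{L^\infty([k,k+1])}$. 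On each unit interval with $k \geq 1$ I would combine the fundamental-theorem identity
\[
  |f(x)|^2 \leq \|f\|_{L^2([k,k+1])}^2 + 2\|f\|_{L^2([k,k+1])}\|f'\|_{L^2([k,k+1])}
\]
with the weight control $\|f\|_{L^2([k,k+1])} \leq k^{-2} \|x^2 f\|_2$, then sum over $k$ and invoke H\"older's inequality with conjugate exponents $4/3$ and $4$ on the cross term. This exactly matches the $k^{-1}$ weight from the pointwise estimate against the $\ell^2$-summability of $\{\|f'\|_{L^2([k,k+1])}\}_k$ coming from $f' \in L^2$, yielding $\sum_k \|f\|_{L^\infty([k,k+1])} \leq C \|f\|_{X_{1,2}}$.

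For part (ii), I would work in Fourier variables, writing $T = \cF^{-1} M_{\sqrt v} \cF$ with $M_{\sqrt v}$ denoting multiplication by $\sqrt v$. Using the isomorphism $\cF : X_{k,j}(\C) \to X_{j,k}(\C)$ noted just before the lemma, the mapping property $T \in \cL(X_{k,j})$ reduces to $M_{\sqrt v} \in \cL(X_{j,k})$. Assumption \ref{a:V}\ref{a:V:reg} supplies $\sqrt v \in W^{2,\infty}(\R)$, so by the Leibniz rule $M_{\sqrt v} \in \cL(H^j)$ for every $j \in \{0,1,2\}$, while $\sqrt v \in L^\infty$ preserves the weight $\|x^k \cdot\|_2$. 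Symmetry follows from Plancherel together with $v$ being real (since $V$ is real and even):
\[
  (Tf, g)_{L^2} = (\sqrt v\, \widehat f, \widehat g)_{L^2} = (\widehat f, \sqrt v\, \widehat g)_{L^2} = (f, Tg)_{L^2}.
\]
The extension to $\xi \in X_{k,j}'$ via $\langle T\xi, f \rangle := \langle \xi, Tf \rangle$ is then automatic, with $\|T\xi\|_{X_{k,j}'} \leq \|T\|_{\cL(X_{k,j})} \|\xi\|_{X_{k,j}'}$.

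The main obstacle lies in part (i): showing that the weight $x^2 f \in L^2$ combined only with the $H^1$-regularity of $f$ is already enough to produce a pointwise bound that sums against $\nu^+$, even though $\nu^+$ may carry infinite total mass. The specific H\"older pairing $(4/3, 4)$ is the critical ingredient, as it is the unique exponent choice that simultaneously uses the $\ell^1$-summability of $k^{-4/3}$ and the $\ell^2$-data $\sum_k \|f'\|_{L^2([k,k+1])}^2 = \|f'\|_2^2$. Part (ii) is technically routine once one observes that the precise regularity class $W^{2,\infty}$ of $\sqrt v$ dictates the admissible range $j \in \{0,1,2\}$.
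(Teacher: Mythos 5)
The paper does not contain a proof of this lemma; it is stated as an import, cited verbatim from \cite[Lem.~A.2]{GarroniVanMeursPeletierScardia16}, so there is no internal argument to compare against. Your reconstruction is the natural one and, as far as I can tell, correct: for (ii), conjugating by $\cF$ reduces boundedness of $T$ on $X_{k,j}$ to boundedness of multiplication by $\sqrt v$ on $X_{j,k}$, where $W^{2,\infty}$-regularity of $\sqrt v$ handles the $H^j$ part (forcing $j\le 2$) and $\sqrt v\in L^\infty$ handles the weight; symmetry and the adjoint extension are immediate from this. For (i), the split $\nu=\nu^+-\nu^-$ with a weighted Cauchy--Schwarz for $\nu^-$ and the unit-interval $L^\infty$-estimate combined with the weight decay $k^{-2}$ for $\nu^+$ is the right mechanism, and your H\"older pairing $(4/3,4)$ does close the sum against $\ell^2$-control of $\|f'\|_{L^2([k,k+1])}$.

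One small omission to repair: your weight-reduction estimate $\|f\|_{L^2([k,k+1])}\le k^{-2}\|x^2 f\|_2$ degenerates at $k=0$, and you only treat $k\ge 1$ explicitly. Add a sentence handling $[0,1]$ by the plain one-dimensional Sobolev embedding $\|f\|_{L^\infty([0,1])}\le C\|f\|_{H^1(\R)}\le C\|f\|_{X_{1,2}}$, which is all that is needed there. Also note that the lemma's statement has an apparent typo ($\ell$ versus $j$ in the quantifier); you correctly read the intended range as $j\in\{0,1,2\}$, which is precisely what the $W^{2,\infty}$-regularity of $\sqrt v$ permits.
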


\begin{lem}[{\cite[Lem.\ 3.4]{GarroniVanMeursPeletierScardia16}}] \label{l:fg:nu}
There exist constants $C, \e_M > 0$ with 
\[
  \sup_{M \geq 0} \e_M < \infty 
  \qand  
  \e_M \xto{M \to \infty} 0
\]  
  such that for all $\nu \in \cA$ and all $M \geq 0$ it holds that
\begin{align} \label{gnu:UB:l}
  \int_M^\infty g \, d|\nu| 
  &\leq \e_M \sup_{x \geq 0} | \nu | ([x, x+1]),
  \quad \text{where $g$ is as in \eqref{g}, and}  \\ \label{fnu:UB:l}
  \bigg| \int_\Omega f \nu \, \bigg| 
  &\leq C \|f\|_{X_{1,2}} \sup_{x \geq 0} | \nu | ([x, x+1])
  \quad \text{for all } f \in X_{1,2}.
\end{align}
\end{lem}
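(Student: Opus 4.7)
The plan is to prove the two estimates by separate arguments, each exploiting one of the complementary conditions in the definition \eqref{A} of $\cA$: the $L^\infty$-density bound on $\nu^-$ and the translation-invariant local-mass bound on $\nu^+$.

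For \eqref{gnu:UB:l}, I would first note that under Assumption \ref{a:V} the tail $g$ is non-negative and non-increasing on $[0, \infty)$, and Fubini combined with Assumption \ref{a:V}\ref{a:V:int} gives
\[
\int_0^\infty g(x) \, dx = \int_0^\infty x V(x) \, dx + \tfrac12 \|V\|_1 < \infty.
\]
Partitioning $[M, \infty)$ into unit intervals $I_k = [M+k, M+k+1]$ and using monotonicity of $g$ on each $I_k$ yields
\[
\int_M^\infty g \, d|\nu| \leq \sum_{k=0}^\infty g(M+k) \, |\nu|(I_k) \leq \bigg( \sup_{x \geq 0} |\nu|([x, x+1]) \bigg) \sum_{k=0}^\infty g(M+k),
\]
so that $\e_M := \sum_{k \geq 0} g(M+k)$ satisfies $\e_M \leq g(M) + \int_M^\infty g$ by integral comparison, hence is uniformly bounded by $\e_0$ and vanishes as $M \to \infty$ by dominated convergence.

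For \eqref{fnu:UB:l}, I would use the Jordan decomposition $\nu = \nu^+ - \nu^-$ and bound the two pieces separately. The $\nu^-$-contribution is easy: the density bound $\nu^- \leq \rhomin(0) \cL$ together with Cauchy-Schwarz against the integrable weight $(1+x^2)^{-1}$ gives
\[
\bigg| \int f \, d\nu^- \bigg| \leq \rhomin(0) \int_0^\infty |f| \leq C \rhomin(0) \bigl( \|f\|_2 + \|x^2 f\|_2 \bigr) \leq C \rhomin(0) \|f\|_{X_{1,2}}.
\]
For the $\nu^+$-contribution I would partition $\Omega$ into unit intervals $[k, k+1]$, use $|\int_{[k,k+1]} f \, d\nu^+| \leq K \|f\|_{L^\infty([k, k+1])}$ with $K := \sup_{x \geq 0} \nu^+([x, x+1])$, and reduce the estimate to the weighted series bound $\sum_k \|f\|_{L^\infty([k, k+1])} \leq C \|f\|_{X_{1,2}}$. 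I would derive this by combining (i) the unit-interval Sobolev embedding $\|f\|_{L^\infty([k,k+1])} \lesssim \|f\|_{H^1([k,k+1])}$, which gives $\bigl( \sum_k \|f\|_{L^\infty([k,k+1])}^2 \bigr)^{1/2} \leq C \|f\|_{H^1}$, with (ii) the pointwise tail decay $|f(x)| \leq C(1+x)^{-1}\|f\|_{X_{1,2}}$, obtained from the identity $|f(x)|^2 = -2\Re \int_x^\infty f \overline{f'} \, dt$, the weighted estimate $\|f\|_{L^2([x,\infty))} \leq x^{-2}\|x^2 f\|_2$ for $x \geq 1$, and $\|f'\|_{L^2([x,\infty))} \leq \|f'\|_2$.

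The delicate step is to close the weighted series estimate from (i) and (ii) without invoking a weighted first-derivative norm (such as $\|xf'\|_2$) that is not controlled by $\|f\|_{X_{1,2}}$. I would handle this by following the careful index-splitting argument in the proof of \cite[Lem.\ 3.4]{GarroniVanMeursPeletierScardia16}, of which the present lemma is a verbatim restatement in the notation of the current paper; the only verification needed is that the definitions of $\cA$ and $X_{1,2}$ employed here match those in \cite{GarroniVanMeursPeletierScardia16}, which they do.
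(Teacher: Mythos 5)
Your argument for \eqref{gnu:UB:l} is sound: $g$ is nonnegative and non-increasing, the tail $\int_0^\infty g = \int_0^\infty y V(y)\,dy < \infty$ by Fubini and Assumption \ref{a:V}\ref{a:V:int} (your expression with the extra $\tfrac12\|V\|_1$ term is a small slip, but it is inconsequential since only finiteness matters), and the unit-interval decomposition with $\e_M := \sum_{k\geq 0} g(M+k)$ closes as you say.

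There is, however, a genuine flaw in your treatment of \eqref{fnu:UB:l}. Your bound for the $\nu^-$-contribution produces $C\,\rhomin(0)\,\|f\|_{X_{1,2}}$, which is \emph{not} of the required form $C\,\|f\|_{X_{1,2}}\sup_{x\geq 0}|\nu|([x,x+1])$: the constant $\rhomin(0)$ is a ceiling on the density of $\nu^-$, not a lower bound for $\sup_x|\nu|([x,x+1])$. For instance, $\nu = -\e\,\cL|_{[0,1]}$ with $0<\e\ll\rhomin(0)$ lies in $\cA$ with $\sup_x|\nu|([x,x+1]) = \e$, yet your estimate only gives a bound proportional to $\rhomin(0)$. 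The asymmetric splitting into a density argument for $\nu^-$ and an interval argument for $\nu^+$ is thus a wrong move; the correct and simpler route is to treat the two parts uniformly, i.e.\ estimate $\int |f|\,d|\nu| = \sum_k \int_{[k,k+1]} |f|\,d|\nu| \leq \big(\sup_x|\nu|([x,x+1])\big) \sum_k \|f\|_{L^\infty([k,k+1])}$, reducing everything to the single series bound $\sum_k \|f\|_{L^\infty([k,k+1])} \leq C\|f\|_{X_{1,2}}$.

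On that series bound, you correctly observe that the $\ell^2$-summability coming from the unit-interval Sobolev embedding and the pointwise tail decay $|f(x)| \lesssim (1+x)^{-1}\|f\|_{X_{1,2}}$ do not by themselves yield $\ell^1$-summability (the decay rate is borderline non-summable and the interpolation exponents do not match). Since the paper itself does not reprove this lemma but states that the proof of \cite[Lem.\ 3.4]{GarroniVanMeursPeletierScardia16} directly carries over, deferring the index-splitting argument to that reference is acceptable; but note the present lemma is explicitly advertised as a \emph{stronger} statement than \cite[Lem.\ 3.4]{GarroniVanMeursPeletierScardia16}, not a verbatim restatement as you claim, so a word on why the original proof still gives the strengthened conclusion is warranted. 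If you wish to make the argument self-contained, the missing ingredient is a weighted Young-type splitting of the cross term in the unit-interval Sobolev bound, e.g.\ $\|f\|_{L^2(I_k)}^{1/2}\|f'\|_{L^2(I_k)}^{1/2} \leq \tfrac12\big(k\,\|f\|_{L^2(I_k)} + k^{-1}\|f'\|_{L^2(I_k)}\big)$, after which the $x^2$-weight in $X_{1,2}$ makes the $\|f\|_{L^2(I_k)}$-series $\ell^1$ and Cauchy--Schwarz handles the $k^{-1}\|f'\|_{L^2(I_k)}$-series.
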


We remark that while Lemma \ref{l:fg:nu} is a stronger statement than  \cite[Lem.\ 3.4]{GarroniVanMeursPeletierScardia16}, the proof in \cite{GarroniVanMeursPeletierScardia16} instantly implies Lemma \ref{l:fg:nu}.

\subsection{A priori bounds on $\nu_n$}
\label{s:func:Fn}

First we state the counterpart of Lemma \ref{l:T}\ref{l:T:on:A} in the discrete setting. Since Dirac-delta measures are included in $H^{-1}(\R)$, we obtain
\begin{equation} \label{cAn:in:X10p}
  \cA_n \subset H^{-1}(\R) = X_{1,0}'.
\end{equation}

While $\cA_n \subset \cA$, it is pointless to consider $F$ on $\cA_n$, because $F(\nu_n) = \infty$ for each $\nu_n \in \cA_n$ due to the discreteness and the singularity of $V$ at $0$. This is the crucial difference with the continuous setting in \cite{GarroniVanMeursPeletierScardia16} in which $F_n$ is a small perturbation of $F |_{\cA_n}$.  

To deal with the discreteness, we construct a careful regularization of $V$.
With this aim, let $\lambda, \delta > 0$ be as in Assumption \ref{a:V}\ref{a:V:cv}. For $\beta \in (0,\delta)$ we define the regularization
\begin{equation} \label{Vbeta}
  V^\beta(x)
  = \left\{ \begin{aligned}
    & V(\beta) + (x - \beta) V'(\beta) + \frac\lambda2 (x - \beta)^2
    && \text{if } 0 \leq x < \beta \\
    & V(x)
    && \text{if } x \geq \beta,
  \end{aligned} \right.
\end{equation}
and consider the even extension of $V^\beta$ to $\R$.
Figure \ref{fig:Vbeta} illustrates $V^\beta$. Note that on $(0, \beta)$, $V^\beta$ is a parabola which is tangent to $V$ at $x = \beta$. We also set
\[
  W^\beta := V - V^\beta.
\]
Lemma \ref{l:Vbeta} lists several properties of $V^\beta, W^\beta$.

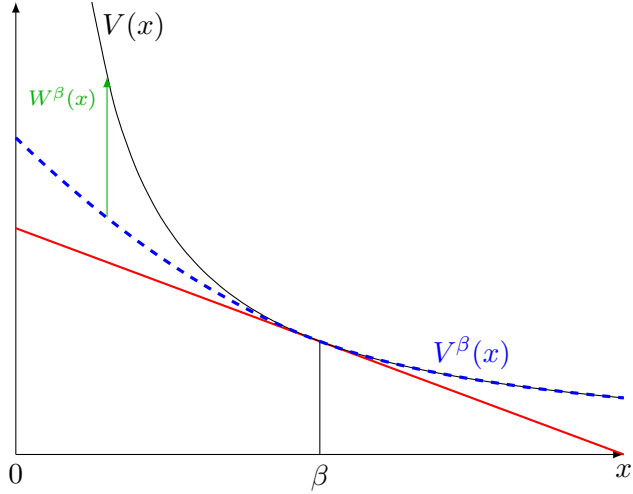
\begin{figure}[h]
\centering
\begin{tikzpicture}[xscale=2, yscale = 3, >= latex]    
\def \w {2}
       
\draw[->] (0,0) --++ (4,0) node[below] {$x$};
\draw[->] (0,0) node[below]{$0$} -- (0,2);

\draw (2,0) node[below]{$\beta$} --++ (0,.5);
\draw[thick, red] (4,0) -- (0,1);
\draw[domain=.5:4, smooth] plot (\x,{1/\x});
\draw[domain=0:2, smooth, very thick, blue, dashed] plot (\x,{ (\x - 2)^2/10 + 1 - \x/4 });
\draw[domain=2:4, smooth, very thick, blue, dashed] plot (\x,{1/\x});
\draw[blue] (3,.333) node[above]{$V^\beta(x)$};
\draw[->, green!70!black] (.6,1.05) -- (.6, 1.67) node[anchor = north east]{\scriptsize $W^\beta(x)$};

%\draw[domain=-4:-.25, smooth, very thick] plot (\x,{-1/\x});

\draw (.5, 2) node[anchor = north west]{$V(x)$};
\end{tikzpicture} \\
\caption{Sketch of $V^\beta$. The first two terms in \eqref{Vbeta} for $x < \beta$ describe the tangent line (red) of $V$ at $x = \beta$.}
\label{fig:Vbeta}
\end{figure} 

\begin{lem}[Properties of $V^\beta, W^\beta$] \label{l:Vbeta}
  There exist constants $C,c > 0$ such that for all $\beta$ small enough 
\begin{enumerate}[label=(\roman*)]
  \item \label{l:Vbeta:ptws:bds}(Pointwise bounds). $0 \leq V^\beta \leq V$ and $0 \leq W^\beta \leq V$;
  \item \label{l:Vbeta:cv} (Convexity).  $V^\beta$ and $W^\beta$ are convex and non-increasing on $(0, \infty)$. Moreover, $V^\beta$ satisfies Assumption \ref{a:V}\ref{a:V:cv} with the same constants $\lambda, \delta$;
  \item \label{l:Vbeta:supp:W} (Narrow support). $\supp W^\beta \subset [-\beta, \beta]$;
  \item \label{l:Vbeta:cF} (Fourier transform). $v_\beta := \cF V^\beta$ is real-valued and even, and satisfies 
\begin{equation*} %\label{vbeta:bds}
  \| v_\beta \|_\infty \leq \| v \|_\infty
  \qand 
  v_\beta(\omega) \geq c \min \{ 1, |\omega|^{-2} \} > 0
  \quad \text{for all } \omega \in \R. 
\end{equation*}
  \item \label{l:Vbeta:L1Linf} ($L^1$, $L^\infty$ bounds). $\| V^\beta \|_\infty = V^\beta(0) \leq  C \beta^{-a}$ and $\| W^\beta \|_1 = \int_{\R} W^\beta \leq C \beta^{1-a}$.
\end{enumerate}
\end{lem}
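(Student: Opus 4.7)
The parts are best proved in the order \ref{l:Vbeta:supp:W}, \ref{l:Vbeta:cv}, \ref{l:Vbeta:ptws:bds}, \ref{l:Vbeta:cF}, \ref{l:Vbeta:L1Linf}, since each successive statement leans on the previous ones. Part \ref{l:Vbeta:supp:W} is immediate from \eqref{Vbeta}. For \ref{l:Vbeta:cv} I would split $(0,\infty)$ at $\beta$: on $(0,\beta)$, $V^\beta$ is a parabola with $(V^\beta)'' \equiv \lambda$; on $(\beta,\infty)$, $V^\beta = V$ is convex by Assumption \ref{a:V}\ref{a:V:cv}; and by construction the two one-sided derivatives at $\beta$ both equal $V'(\beta)$, so $V^\beta \in C^1$ there and is convex on the whole of $(0,\infty)$. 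The local $\lambda$-convexity on $(0,\delta)$ (with the same $\lambda,\delta$) follows by combining $(V^\beta)''=\lambda$ on $(0,\beta)$ with $V''\geq\lambda$ on $(\beta,\delta)$, provided $\beta<\delta$. Monotonicity is forced by $V'(\beta)\leq 0$, which makes $(V^\beta)'(x)=V'(\beta)+\lambda(x-\beta)\leq 0$ on $(0,\beta)$. For $W^\beta$, one has $(W^\beta)''=V''-\lambda\geq 0$ on $(0,\beta)$ by the same $\lambda$-convexity and $W^\beta\equiv 0$ on $[\beta,\infty)$, with matching zero derivatives at $\beta^{\pm}$; hence $W^\beta$ is convex with minimum $0$ at $\beta$, and therefore non-increasing on $(0,\infty)$. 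Part \ref{l:Vbeta:ptws:bds} then drops out: $V^\beta\leq V$ on $(0,\beta)$ is precisely the tangent-line inequality for the convex function $V-\tfrac{\lambda}{2}x^2$ at $\beta$, while $V^\beta\geq 0$ follows because $V^\beta$ is non-increasing with $V^\beta(\beta)=V(\beta)\geq 0$; the $W^\beta$ bounds are then immediate.

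For \ref{l:Vbeta:cF}, real-valuedness and evenness of $v_\beta$ are inherited from $V^\beta$. Since $V^\beta\geq 0$, one has $|v_\beta(\omega)|\leq v_\beta(0)=\|V^\beta\|_1\leq\|V\|_1=v(0)=\|v\|_\infty$ using $V^\beta\leq V$ from \ref{l:Vbeta:ptws:bds}. The lower bound is obtained by replaying the proof of Lemma \ref{l:Vhat:LB} verbatim with $V$ replaced by $V^\beta$; the only input used there is the pointwise lower bound $V''(z)\geq\lambda\cdot 1(z<\delta)$, and by \ref{l:Vbeta:cv} the very same bound $(V^\beta)''(z)\geq\lambda\cdot 1(z<\delta)$ holds, so the argument produces a constant $c>0$ which is independent of $\beta$. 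This step is the one I expect to be the most delicate, and the careful phrasing of \ref{l:Vbeta:cv} insisting on the \emph{same} constants $\lambda,\delta$ is precisely what guarantees the $\beta$-uniformity here.

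For \ref{l:Vbeta:L1Linf}, since $V^\beta$ is even and non-increasing on $(0,\infty)$, $\|V^\beta\|_\infty=V^\beta(0)=V(\beta)-\beta V'(\beta)+\tfrac{\lambda}{2}\beta^2$. I would bound $V(\beta)$ directly by Assumption \ref{a:V}\ref{a:V:sing}; for $V'(\beta)$, convexity of $V$ applied to the chord from $\beta/2$ to $\beta$ gives $V(\beta/2)\geq V(\beta)+V'(\beta)(\beta/2-\beta)$, hence $|V'(\beta)|\leq 2\beta^{-1}V(\beta/2)\leq C\beta^{-a-1}$. All three contributions to $V^\beta(0)$ are therefore $O(\beta^{-a})$. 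For $\|W^\beta\|_1$, combining \ref{l:Vbeta:ptws:bds} and \ref{l:Vbeta:supp:W} yields $W^\beta\leq V\cdot 1_{[-\beta,\beta]}$, and integrating the singularity bound \eqref{V:sing:UB:intro} delivers the claimed $O(\beta^{1-a})$.
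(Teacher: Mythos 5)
Your proof is correct and follows essentially the same route as the paper's; you simply spell out the arguments that the paper dismisses as ``by construction.'' The key ingredients coincide: the lower bound in \ref{l:Vbeta:cF} is obtained by rerunning the proof of Lemma \ref{l:Vhat:LB} with $V$ replaced by $V^\beta$, using only $(V^\beta)'' \geq \lambda\,1(\cdot<\delta)$, which is why the paper insists on the same $\lambda,\delta$ in \ref{l:Vbeta:cv}; the bound on $\|W^\beta\|_1$ comes from \ref{l:Vbeta:supp:W} plus the singularity bound; and the bound on $V^\beta(0)$ uses the same estimate $|V'(\beta)|\lesssim\beta^{-1}V(\beta/2)$, which you derive from the convexity chord inequality while the paper invokes the Mean Value Theorem together with monotonicity of $|V'|$ --- the two arguments are interchangeable.
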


\begin{proof}
Properties \ref{l:Vbeta:ptws:bds}, \ref{l:Vbeta:cv}, \ref{l:Vbeta:supp:W} and the fact that $v_\beta$ is real-valued and even hold by construction. From these properties, we observe that the proof of Lemma \ref{l:Vhat:LB} also applies to $v_\beta$; this proves the lower bound in \ref{l:Vbeta:cF}. The upper bound follows simply from $\|v_\beta\|_\infty = \|V^\beta\|_1$ and \ref{l:Vbeta:ptws:bds}. The bound on $W^\beta$ in \ref{l:Vbeta:L1Linf} follows from \ref{l:Vbeta:supp:W} through
\[
  \int_{\R} W^\beta 
  = \int_{-\beta}^\beta W^\beta
  \leq \int_{-\beta}^\beta V
  \leq C \int_{-\beta}^\beta |x|^{-a} \, dx
  = C' \beta^{1-a}.
\]
To prove the bound on $V^\beta(0)$ in \ref{l:Vbeta:L1Linf}, we observe from \eqref{Vbeta} that
\[
  V^\beta(0)
  = V(\beta) - \beta V'(\beta) + \frac\lambda2 \beta^2
  \leq C \beta^{-a} + \beta |V'(\beta)|.
\]
By the Mean Value Theorem and the fact that $|V'|$ is non-increasing, we get
\[
  |V'(\beta)|
  \leq \frac{ V(\beta/2) - V(\beta) }{ \beta/2 }
  \leq 2 \frac{ C \beta^{-a} - 0 }{ \beta }
  = 2C \beta^{-1-a}.
\]
This completes the proof of Lemma \ref{l:Vbeta}.
\end{proof}

Lemma \ref{l:Vbeta}\ref{l:Vbeta:cF} allows us to define, similarly to $V$, the convolutional square root operator
\[
  T^\beta : L^2(\R) \to L^2(\R), \qquad
  T^\beta f 
  := \cF^{-1} ( \sqrt{ v_\beta } \widehat f ),
\]
where $v_\beta = \cF V^\beta$. As in \eqref{Vff:eq:TfL2}, it is easy to see (in Fourier space) that $V^\beta * f = T^\beta T^\beta f$ and 
\begin{equation} \label{pf:35}
  \int_\R (V^\beta * f) f = \| T^\beta f \|_2^2.
\end{equation}
Lemma \ref{l:Tbeta} states further properties of $T^\beta$.

\begin{lem} \label{l:Tbeta} The operator $T^\beta$ defined above satisfies
\begin{enumerate}[label=(\roman*)] 
  \item \label{l:Tbeta:BLSO} For each $k \in \N$ and all $\beta$ small enough, $T^\beta \in \cL( X_{k,0} )$ is symmetric, and can be extended to $T^\beta \in \cL(X_{k,0}')$ as in Lemma \ref{l:T};
  \item \label{l:Tbeta:LXk0:bd} $\displaystyle \limsup_{ \beta \to 0} \|T^\beta\|_{\cL( X_{k,0} )} < \infty$ for each $k \in \N$; 
  \item \label{l:Tbeta:Tnu2:Vnunu} For all $\beta$ small enough, each $n \geq 1$ and all $\nu_n \in \cA_n$, there holds $T^\beta \nu_n \in L^2(\R)$,
  \begin{equation*}
    V^\beta \ast \nu_n = T^\beta T^\beta \nu_n
    \qand
    \int_\Omega (V^\beta \ast \nu_n) \, d \nu_n
    = \| T^\beta \nu_n \|_2^2 \text ;
  \end{equation*}
  \item \label{l:Tbeta:TTbetag:bd} There exists $\beta_0 > 0$ such that for all Schwarz functions $f \in \cS (\R)$ there exists $C_f > 0$ such that for all $\beta \in (0,\beta_0)$
  \[
    \| (T - T^\beta) f \|_{X_{1,2}} 
    \leq C_f \beta^{1-a}.
  \] 
\end{enumerate} 
\end{lem}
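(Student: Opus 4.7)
Work entirely on the Fourier side, exploiting that $T^\beta$ acts as multiplication by the nonnegative real-valued function $\sqrt{v_\beta}$, which by Lemma \ref{l:Vbeta}\ref{l:Vbeta:cF} is in $L^\infty(\R)$ with $\|\sqrt{v_\beta}\|_\infty \leq \sqrt{\|v\|_\infty}$ uniformly in $\beta$ and satisfies $\sqrt{v_\beta}(\omega) \geq \sqrt c \min\{1, |\omega|^{-1}\}$. For (i) and (ii), identify $X_{k,0} = H^k(\R)$ via its Fourier characterization as weighted $L^2$ with weight $(1+|\omega|^2)^{k/2}$; multiplication by a bounded real-valued function is a symmetric bounded operator on this space of norm at most $\sqrt{\|v\|_\infty}$, and the extension to $X_{k,0}'$ by transposition proceeds verbatim as in Lemma \ref{l:T}\ref{l:T:SBLO}.

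For (iii), the key preparatory fact is $v_\beta \in L^1(\R)$ for each fixed small $\beta > 0$. By construction $V^\beta$ is globally Lipschitz with $(V^\beta)'$ smooth on $\R \setminus \{0\}$ having a single jump at $0$, and its pointwise second derivative equals $\lambda$ on $(0,\beta)$ and $V''$ on $(\beta,\infty)$---both in $L^1$ by Assumption \ref{a:V}\ref{a:V:reg}, convexity, and $V'(\infty) = 0$. Taking Fourier transforms of the distributional identity $(V^\beta)''_{\mathrm{dist}} = \{(V^\beta)''\} + [(V^\beta)']_0 \, \delta_0$ shows $\omega^2 v_\beta(\omega) \in L^\infty(\R)$, so $v_\beta$ decays like $|\omega|^{-2}$ at infinity and is integrable. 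Writing $\nu_n = -\tilde\rho_* + \tfrac{\gamma_n}{n}\sum_{i=0}^n \delta_{y_i}$, a compactly supported finite signed measure, Fubini combined with the pointwise Fourier inversion formula for $v_\beta$ gives
\[
\|T^\beta\nu_n\|_2^2 \;=\; \int_\R v_\beta(\omega)\,|\widehat{\nu_n}(\omega)|^2\,d\omega \;=\; \iint V^\beta(x-y)\,d\nu_n(y)\,d\nu_n(x),
\]
finite because $V^\beta$ is bounded (Lemma \ref{l:Vbeta}\ref{l:Vbeta:L1Linf}) and $\nu_n$ has finite total variation. This simultaneously yields $T^\beta\nu_n \in L^2$ and the stated energy identity; the relation $V^\beta * \nu_n = T^\beta T^\beta \nu_n$ then follows since both sides have Fourier transform $v_\beta\widehat{\nu_n}$ and both lie in $L^2(\R)$ (the left by Young's inequality, the right by applying (i) to $T^\beta\nu_n$).

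For (iv), the heart of the lemma, reduce to the Fourier side: $\|(T - T^\beta)f\|_{X_{1,2}} \lesssim \|(\sqrt v - \sqrt{v_\beta})\hat f\|_{X_{2,1}}$, and write $\Delta := \sqrt v - \sqrt{v_\beta} = \widehat{W^\beta}/(\sqrt v + \sqrt{v_\beta})$. Since $\supp W^\beta \subseteq [-\beta,\beta]$ and $\|W^\beta\|_1 \leq C\beta^{1-a}$ by Lemma \ref{l:Vbeta}\ref{l:Vbeta:supp:W} and \ref{l:Vbeta:L1Linf}, differentiating the Fourier integral gives
\[
|\widehat{W^\beta}^{(j)}(\omega)| \;\leq\; (2\pi)^j \|x^j W^\beta\|_1 \;\leq\; C\beta^{j+1-a}\qquad (j = 0, 1, 2),
\]
so in particular the numerator is small of size $\beta^{1-a}$. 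By Lemma \ref{l:Vhat:LB} and Lemma \ref{l:Vbeta}\ref{l:Vbeta:cF}, the denominator satisfies $\sqrt v + \sqrt{v_\beta} \geq c\min\{1,|\omega|^{-1}\}$, and its first two derivatives are polynomially bounded in $|\omega|$ via Assumption \ref{a:V}\ref{a:V:reg} together with the bootstrap $\sqrt{v_\beta} = \sqrt v - \Delta$. The quotient and product rules then yield pointwise estimates $|\Delta^{(\ell)}(\omega)| \leq C\beta^{1-a} P_\ell(|\omega|)$ for polynomials $P_\ell$, $\ell = 0,1,2$. Expanding $(\Delta\hat f)^{(\ell)}$ by the Leibniz rule and using that $\hat f \in \cS(\R)$ produces arbitrary polynomial decay of each $\hat f^{(k)}$, so every contribution to $\|\Delta\hat f\|_{X_{2,1}}^2$ is bounded by $(C_f \beta^{1-a})^2$, yielding (iv).

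The main obstacle is obtaining the sharp exponent $\beta^{1-a}$ in (iv): the naive estimate $(\sqrt v - \sqrt{v_\beta})^2 \leq |v - v_\beta|$ yields only $\beta^{(1-a)/2}$, so one must exploit the quotient representation of $\Delta$, accepting polynomial growth in $|\omega|$ that is crucially absorbed by the Schwartz decay of $\hat f$. Controlling the derivatives of $\sqrt{v_\beta}$ uniformly in $\beta$---since only a first moment of $V$ is assumed---is a secondary technical hurdle, best handled by the bootstrap $\sqrt{v_\beta} = \sqrt v - \Delta$ combined with the already-polynomially-controlled $\Delta^{(\ell)}$.
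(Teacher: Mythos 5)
Your proof of (i)--(ii) coincides with the paper's. The interesting divergence is in (iii): the paper never establishes $v_\beta \in L^1(\R)$, and instead proves the identity $\int_\Omega (V^\beta * \nu_n)\,d\nu_n = \|T^\beta\nu_n\|_2^2$ via a duality and density argument --- extending the seminorm $\varphi \mapsto \int_\R (V^\beta*\varphi)\varphi$ to distributions as in \eqref{L2:norm:extended}, rewriting it as $\sup_{\psi \in T^\beta X_{1,0}}(2\langle\psi, T^\beta\nu_n\rangle - \|\psi\|_2^2)$, and then proving that $T^\beta X_{1,0}$ is dense in $X_{1,0}$ using the lower bound of Lemma \ref{l:Vbeta}\ref{l:Vbeta:cF}. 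Your route --- deducing $v_\beta = O(|\omega|^{-2})$ from the distributional second derivative of $V^\beta$ (correct: $(V^\beta)'$ is continuous at $\pm\beta$ because the parabola is tangent to $V$ there, so the only Dirac mass in $(V^\beta)''_{\mathrm{dist}}$ sits at the origin, while the regular part lies in $L^1(\R)$ by $W^{2,1}_{\loc}$-regularity, convexity, and $V'(\infty)=0$) and then applying Fubini together with pointwise Fourier inversion --- is a legitimate, more self-contained derivation, and it yields $T^\beta\nu_n\in L^2(\R)$ in the same stroke. Both are correct; the paper recycles the abstract machinery already built for $T$ and $F$, whereas your argument is more elementary at the cost of the extra step $v_\beta \in L^1(\R)$.

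In (iv) your structure matches the paper's: the quotient representation, the moment bounds $\|w_\beta^{(j)}\|_\infty \leq C\beta^{j+1-a}$ from $\supp W^\beta \subset [-\beta,\beta]$, and the crucial observation that the polynomial growth in $|\omega|$ coming from the denominator is absorbed by the Schwartz decay of $\widehat f$; your remark that the naive bound $|\sqrt v - \sqrt{v_\beta}|\le\sqrt{|v - v_\beta|}$ loses a factor of two in the exponent is also exactly the point. The one step that needs repair is your proposal to control $(\sqrt{v_\beta})^{(\ell)}$ via the bootstrap $\sqrt{v_\beta} = \sqrt v - \Delta$ "combined with the already-polynomially-controlled $\Delta^{(\ell)}$": this is circular as written, because the quotient-rule expansion of $\Delta^{(\ell)}$ itself contains $(\sqrt{v_\beta})^{(\ell)}$. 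The clean fix --- and what the paper does --- is to avoid $\Delta$ in the denominator estimates altogether, writing $(\sqrt{v_\beta})' = v_\beta'/(2\sqrt{v_\beta}) = (v' - w_\beta')/(2\sqrt{v_\beta})$ and using $v' = 2\sqrt v\,(\sqrt v)' \in L^\infty(\R)$ from Assumption \ref{a:V}\ref{a:V:reg} together with the moment bound on $w_\beta'$ and the pointwise lower bound on $\sqrt{v_\beta}$; for the second derivative one uses $v'' = 2\sqrt v\,(\sqrt v)'' + 2((\sqrt v)')^2 \in L^\infty(\R)$ analogously. With that substitution the estimate $|\Delta^{(\ell)}(\omega)| \leq C\beta^{1-a}P_\ell(|\omega|)$ follows without circularity, and the remainder of your argument is sound.
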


\begin{proof}
Since $\sqrt{ v_\beta } = (\cF V^\beta)^{1/2}$ is even and bounded, \ref{l:Tbeta:BLSO} and \ref{l:Tbeta:LXk0:bd} follow from the same argument used in the proof of \cite[Lem.\ A.2(ii)]{GarroniVanMeursPeletierScardia16}. In particular, 
\[
  \|T^\beta\|_{\cL( X_{k,0} )}
  \leq C \| \sqrt{ v_\beta } \|_\infty,
\]
for which Lemma \ref{l:Vbeta}\ref{l:Vbeta:cF} provides a sufficient bound.

Next we proof \ref{l:Tbeta:Tnu2:Vnunu}. It is clear that $V^\beta \ast \nu_n = T^\beta T^\beta \nu_n$ holds in distributional sense. Note that \eqref{pf:35} defines a seminorm, which can be extended to distributions similarly to \eqref{L2:norm:extended}. Using this
and applying \ref{l:Tbeta:BLSO}, we write
\begin{align*}
  \infty > \int_\Omega (V^\beta \ast \nu_n) \, d \nu_n
  &= \sup_{\varphi \in X_{1,0}} \bigg( 2\langle V^\beta * \varphi, \nu_n \rangle - \int_\Omega (V^\beta \ast \varphi) \, d \varphi \bigg) \\
  &= \sup_{\varphi \in X_{1,0}} \big( 2\langle T^\beta T^\beta \varphi, \nu_n \rangle - \| T^\beta \varphi \|_2^2 \big) \\
  &= \sup_{\psi \in T^\beta X_{1,0}} \big( 2\langle \psi, T^\beta\nu_n \rangle - \| \psi \|_2^2 \big),
\end{align*}
where $T^\beta X_{1,0} := \{ T^\beta f \mid f \in X_{1,0} \}$.
We claim that $T^\beta X_{1,0}$ is dense in $X_{1,0}$. From this claim, \ref{l:Tbeta:Tnu2:Vnunu} follows by applying \eqref{L2:norm:extended}. To prove the claim, we note that, by translating it to Fourier space, it is equivalent to the claim that $\{ \sqrt{ v_\beta } f \mid f \in X_{0,1} \}$ is dense in $X_{0,1}$. This is easily seen to be true; for $f \in X_{0,1}$, set $f_k := f |_{(-k,k)}$ and note from the lower bound in Lemma \ref{l:Vbeta}\ref{l:Vbeta:cF} that $f_k / \sqrt{ v_\beta } \in X_{0,1}$ for all $k \in \N$.

Finally we prove \ref{l:Tbeta:TTbetag:bd}.
Note that
\begin{equation} \label{pf:33}    
  \| (T - T^\beta) f \|_{X_{1,2}} 
  \leq C \big\| \cF \big( (T - T^\beta) f \big) \big\|_{X_{2,1}}. 
\end{equation}
Recalling $v = \cF V$, we set $w_\beta := \cF W^\beta = v - v_\beta$ and compute
\[
  \cF \big( (T - T^\beta) f \big)
  = (\sqrt v - \sqrt{v_\beta}) \widehat f
  = \frac{ w_\beta }{\sqrt v + \sqrt{v_\beta}} \widehat f
  =: w_\beta u_\beta \widehat f,
\]
where $u_\beta = (\sqrt v + \sqrt{v_\beta})^{-1/2}$. We observe from Lemma \ref{l:Vbeta}\ref{l:Vbeta:ptws:bds},\ref{l:Vbeta:supp:W} that for any $j \in \N$
\[
  \| w_\beta^{(j)} \|_\infty 
  = (2\pi)^j \| \cF( x^j W^\beta ) \|_\infty
  = (2\pi)^j \int_\R |x|^j W^\beta(x) \, dx
  \leq C_j \int_0^\beta x^{j-a} \, dx
  = C_j' \beta^{j+1-a}.
\]
This will eventually result in the prefactor in \ref{l:Tbeta:TTbetag:bd}. 

Next we bound $u_\beta$. We recall from Assumption \ref{a:V}\ref{a:V:reg} that
\[
  \sqrt v, (\sqrt v)' ,(\sqrt v)''  \in L^\infty(\R)
\]
and from Lemmas \ref{l:Vhat:LB} and \ref{l:Vbeta}\ref{l:Vbeta:cF} that
\[
  \min \big\{ \sqrt{ v }, \sqrt{ v_\beta } \big\} \geq c \min \Big\{ 1,  \frac1{|\omega|} \Big\}.
\]
Here and henceforth, we often abuse notation by removing the variable $\omega$ from the notation for functions in Fourier space.
Then,
\[
  u_\beta
  = \frac1{\sqrt v + \sqrt{v_\beta}}
  \leq C(1 + |\omega|)
  =: C \oomega^1,
\]
where $C$ is independent of $\beta$ and 
\[
  \oomega^k := (1 + |\omega|^k)
  \qquad \text{for all } k \in \N.
\]
Using this, we obtain for the $X_{0,1}$-part of the norm in \eqref{pf:33} that
\[
  \big\| \cF \big( (T - T^\beta) f \big) \big\|_{X_{0,1}}
  = \big\| w_\beta u_\beta \widehat f \big\|_{X_{0,1}}
  \leq \big\| \oomega^1 w_\beta u_\beta \widehat f \big\|_2
  \leq C \beta^{1-a} \| \oomega^2 \widehat f \|_2
  = C_f \beta^{1-a}.
\]

For the $H^1$-part of the norm, we compute
\begin{equation} \label{pf:34}
  u_\beta'
  = \frac{-1}{(\sqrt v + \sqrt{v_\beta})^2} \Big( (\sqrt v)' + \frac{v_\beta'}{2 \sqrt{v_\beta}} \Big)
  = - u_\beta^2 \bigg( (\sqrt v)' + \frac{v' - w_\beta'}{2 \sqrt{v_\beta}} \bigg). 
\end{equation} 
Writing $v' = 2 \sqrt v (\sqrt v)' \in L^\infty(\R)$, all terms in the expression for $u_\beta'$ can be bounded from the estimates above. This yields
\[
  |u_\beta'|
  \leq C \oomega^2 \Big( C' + ( C'' + \beta^{2-a} ) \oomega^1 \Big)
  \leq C''' \oomega^3.
\]
Hence,
\begin{align*}
  \big\| \big( \cF \big( (T - T^\beta) f \big) \big)' \big\|_2
  &\leq \big\| w_\beta' u_\beta \widehat f \big\|_2 + \big\| w_\beta u_\beta' \widehat f \big\|_2 + \big\| w_\beta u_\beta (\widehat f)' \big\|_2 \\
  &\leq C \beta^{1-a} \Big( \beta \big\| \oomega^1 \widehat f \big\|_2 + \big\| \oomega^3 \widehat f \big\|_2 + \big\| \oomega^1 (\widehat f)' \big\|_2 \Big) 
  \leq C_f \beta^{1-a}.
\end{align*}

Finally we bound the $H^2$-part of the norm. By the estimates obtained so far,
\begin{align} \notag
  \big\| \big( \cF \big( (T - T^\beta) f \big) \big)'' \big\|_2
  &\leq \big\| w_\beta (u_\beta \widehat f)'' \big\|_2 + 2 \big\| w_\beta' (u_\beta \widehat f)' \big\|_2 + \big\| w_\beta'' u_\beta \widehat f \big\|_2 \\\label{pf:36}
  &\leq C \beta^{1-a} \big\| (u_\beta \widehat f)'' \big\|_2 + C_f \beta^{2-a}.
\end{align}
In preparation for estimating the second derivative, we compute (using the expression in \eqref{pf:34})
\begin{align*}  
  u_\beta''
  = - 2 u_\beta u_\beta' \bigg( (\sqrt v)' + \frac{v' - w_\beta'}{2 \sqrt{v_\beta}} \bigg)
    - u_\beta^2 \bigg( (\sqrt v)'' + \frac{v'' - w_\beta''}{2 \sqrt{v_\beta}} - \frac{(v' - w_\beta')^2}{4 v_\beta^{3/2}} \bigg).
\end{align*}
Then, rewriting
\[
  v'' 
  = ((\sqrt v)^2)''
  = 2 \sqrt v (\sqrt v)'' + 2 ((\sqrt v)')^2
  \in  L^\infty(\R),
\]
we estimate all terms in the expression for $u_\beta''$ by the bounds obtained above. This yields
\begin{align*}
  |u_\beta''|
  \leq C \oomega^1 \oomega^3 (1 + \oomega^1) + C' \oomega^2 (1 + \oomega^1 + \oomega^3)
  \leq C'' \oomega^5. 
\end{align*}
Returning to \eqref{pf:36}, we obtain
\begin{align*}
  \big\| (u_\beta \widehat f)'' \big\|_2
  &\leq \big\| u_\beta'' \widehat f \big\|_2 + 2 \big\| u_\beta' (\widehat f)' \big\|_2 + \big\| u_\beta (\widehat f)'' \big\|_2 \\
  &\leq C \big( \big\| \oomega^5 \widehat f \big\|_2 + \big\| \oomega^3 (\widehat f)' \big\|_2 + \big\| \oomega^1 (\widehat f)'' \big\|_2 \big) 
  \leq C_f.
\end{align*}
Plugging this estimate into \eqref{pf:36} completes the proof of \ref{l:Tbeta:TTbetag:bd}.
\end{proof}

This completes the preliminaries on the regularization $V^\beta$. Next we apply them to construct tools for the proof of Theorem \ref{t}. The first of these tools is a crucial estimate in the proof of compactness. It is the discrete counterpart of \cite[Lem.\ 3.3]{GarroniVanMeursPeletierScardia16}.

\begin{lem} \label{l:nun:bds}
There exists a constant $C > 0$ such that for all $\beta > 0$ small enough and all $n \geq 1$
\begin{align*} 
  \sup_{x \geq 0} \nu_n^+ ([x, x+1]) 
  + \int_{-\infty}^0 V * \nu_n
  \leq 
  C (  \| T^\beta \nu_n \|_{L^2(\R)}  + 1 ).
\end{align*} 
\end{lem}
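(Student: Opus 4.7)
The plan is to first prove the local-density bound
\[
  \sup_{x \geq 0} \nu_n^+([x,x+1]) \leq C \big( \|T^\beta \nu_n\|_2 + 1 \big)
\]
by a test-function argument, and then deduce the force-term bound from it using the rewriting \eqref{force:rewrite}, which gives $\int_{-\infty}^0 V \ast \nu_n = \int g\, d\nu_n^+ - \int_0^\infty g\,\tilderhomin\,dx$. The second integral is a universal constant (bounded by $\|\rhomin\|_\infty \int_0^\infty y V(y)\,dy < \infty$ via Assumption \ref{a:V}\ref{a:V:int}), while the first is controlled by Lemma \ref{l:fg:nu} at $M=0$, which expresses it in terms of the local-density quantity already estimated.

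\noindent \textbf{Local-density step.} Fix $M \geq 2$ large enough that $\int_{|u|>M-1} V \leq 1/4$, and pick $\psi \in C_c^\infty(\R)$ with $\psi \geq \mathbf{1}_{[-M,M]}$ and $\supp \psi \subset [-M-1, M+1]$. Set $\varphi := V^\beta \ast \psi$. For $\beta$ so small that $\|W^\beta\|_1 \leq 1/4$ (available by Lemma \ref{l:Vbeta}\ref{l:Vbeta:L1Linf}), the support property $\supp W^\beta \subset [-\beta,\beta]$ together with the inclusion $[y-M, y+M] \supset [-(M-1), M-1]$ for $y \in [0,1]$ yield the $\beta$-uniform pointwise bound $\varphi \geq 1/2$ on $[0,1]$. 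Decomposing $\nu_n = \nu_n^+ - \tilderhomin \cL$, one then has
\[
  \tfrac12 \nu_n^+([0,1]) \leq \int \varphi\, d\nu_n^+ = \int \varphi\, d\nu_n + \int \varphi\, \tilderhomin\, dx,
\]
with the last integral dominated by $\|\rhomin\|_\infty \|\psi\|_1$ (via Young's inequality and $\|V^\beta\|_1 \leq 1$). For the remaining integral, Lemma \ref{l:Tbeta}\ref{l:Tbeta:Tnu2:Vnunu} yields $V^\beta \ast \nu_n = T^\beta T^\beta \nu_n \in L^2(\R)$, and Fubini together with the symmetric extension of $T^\beta$ (Lemma \ref{l:Tbeta}\ref{l:Tbeta:BLSO}) give
\[
  \bigg| \int \varphi\, d\nu_n \bigg|
  = \big| \langle T^\beta \nu_n, T^\beta \psi\rangle_{L^2} \big|
  \leq \|T^\beta \nu_n\|_2 \|T^\beta \psi\|_2.
\]
The factor $\|T^\beta \psi\|_2$ is uniformly bounded for small $\beta$ by Lemma \ref{l:Tbeta}\ref{l:Tbeta:LXk0:bd}. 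The supremum over $x_0 \geq 0$ follows by replacing $\psi$ with $\psi(\cdot - x_0 - 1/2)$: translation invariance of $T^\beta$ and of the $L^1, L^2$ norms leaves every constant unchanged.

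\noindent \textbf{Main obstacle.} The delicate point is to coordinate the regularization parameter $\beta$ so that every estimate is uniform as $\beta \to 0$. The pointwise lower bound on $\varphi$ hinges on $\|V^\beta\|_1 \to 1$ (Lemma \ref{l:Vbeta}\ref{l:Vbeta:L1Linf}); the Cauchy--Schwarz step hinges on the uniform bound on $\|T^\beta\|_{\cL(L^2)}$ (Lemma \ref{l:Tbeta}\ref{l:Tbeta:LXk0:bd}); and the distributional pairing $\int \varphi\, d\nu_n$ is legitimate because $\varphi \in H^1(\R)$ while $\nu_n \in H^{-1}(\R) = X_{1,0}'$ by \eqref{cAn:in:X10p}. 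A single smooth, compactly supported $\psi$ simultaneously satisfies all three requirements, and this is what makes the argument hang together.
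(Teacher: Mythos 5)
Your proof is correct, and it takes a genuinely different route from the paper in both steps. For the local bound, the paper tests $V^\beta\ast\nu_n^+$ directly against the indicator $\mathbf 1_{[x,x+1]}$: since $V^\beta$ is even and non-increasing, $\int_x^{x+1}V^\beta(y-z)\,dy\geq\int_0^1 V^\beta$ for all $z\in[x,x+1]$, and then $\int_0^1 V^\beta\geq\tfrac12\int_0^1 V>0$ for small $\beta$; the Cauchy--Schwarz step is simply $\int_x^{x+1}T^\beta T^\beta\nu_n\leq\|T^\beta T^\beta\nu_n\|_2\leq C\|T^\beta\nu_n\|_2$. You instead take a smooth bump $\psi$, set $\varphi=V^\beta\ast\psi$, and obtain a uniform lower bound $\varphi\geq1/2$ on a unit interval from $\|W^\beta\|_1\to0$ (Lemma \ref{l:Vbeta}\ref{l:Vbeta:L1Linf}) rather than from monotonicity of $V^\beta$; the Cauchy--Schwarz step becomes $\langle T^\beta\nu_n,T^\beta\psi\rangle$. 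Both arguments pair a localized test function against $V^\beta\ast\nu_n^+$, but the source of the lower bound on the pairing constant is different (geometric/monotonicity vs.\ $L^1$-convergence). For the force term, the difference is more pronounced: the paper bounds $\int_{-\infty}^0 V\ast\nu_n\leq\int_{-\infty}^0\sum_k V(y-k)\nu_n^+([k,k+1])\,dy$ by a direct Riemann-sum comparison using $V$ non-increasing, whereas you rewrite via \eqref{force:rewrite} and then reuse Lemma \ref{l:fg:nu} at $M=0$ together with $\int_0^\infty g\,\tilderhomin\leq\|\rhomin\|_\infty\int_0^\infty yV(y)\,dy$, which is cleaner and avoids re-deriving that estimate. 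One small caveat worth noting: applying Lemma \ref{l:fg:nu} to $\nu_n$ tacitly uses $\cA_n\subset\cA$, which the paper itself asserts in Section \ref{s:func:Fn}; as long as one tracks that the relevant input in its proof is only a uniform $L^\infty$-bound on $\nu_n^-$ (here $\|\rhomin\|_\infty$) rather than specifically $\rhomin(0)$, this is fine. Also be slightly careful that you want $\psi\geq0$ (not just $\psi\geq\mathbf 1_{[-M,M]}$) so that $\varphi\geq0$ and the restriction $\int_{[x_0,x_0+1]}\varphi\,d\nu_n^+\leq\int_\Omega\varphi\,d\nu_n^+$ is valid, but this is a trivial adjustment.
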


\begin{proof}
The proof is a modification of the proof of \cite[Lem.\ 3.3]{GarroniVanMeursPeletierScardia16}. Let $x \geq 0$ be arbitrary. Using that $V^\beta \geq 0$ is non-increasing on $[0,\infty)$, we obtain
\begin{multline*}
  \int_x^{x+1} V^\beta * \nu_n^+
  = \int_\Omega \int_x^{x+1} V^\beta (y-z) \, dy d\nu_n^+(z)
  \geq \int_{[x, x+1]} \int_x^{x+1} V^\beta (y-z) \, dy d\nu_n^+(z) \\
  \geq \int_{[x, x+1]} \bigg( \int_0^1 V^\beta (y) \, dy \bigg) d\nu_n^+(z)
  = \bigg( \int_0^1 V^\beta \bigg) \nu_n^+ ([x, x+1]).
\end{multline*}
Taking $\beta$ small enough such that $\int_0^1 V^\beta \geq \frac12 \int_0^1 V > 0$, Lemma \ref{l:Tbeta}\ref{l:Tbeta:Tnu2:Vnunu} implies
\begin{align*}
  \nu_n^+ ([x, x+1])
  \leq \frac2{\int_0^1 V} \int_x^{x+1} V^\beta * \nu_n^+
  = \frac2{\int_0^1 V} \int_x^{x+1} (T^\beta T^\beta \nu_n) 
    + \frac2{\int_0^1 V} \int_x^{x+1} V^\beta * \nu_n^-.
\end{align*}
Then, applying the Cauchy-Schwarz Inequality and Lemma \ref{l:Tbeta}\ref{l:Tbeta:LXk0:bd}
\begin{align} \notag
  \nu_n^+ ([x, x+1])
  &\leq C \| T^\beta T^\beta \nu_n \|_{L^2(\R)} + C \int_x^{x+1} \int_0^{\gamma_n} V^\beta(y-z) \tilde \rho_*(z) \, dz dy \\\label{pf:32}
  &\leq C' \| T^\beta \nu_n \|_{L^2(\R)} + C \bigg( \int_\R V \bigg) \| \rho_* \|_\infty,
\end{align}
which shows the desired estimate for the first term in the display in Lemma \ref{l:nun:bds}.

For the second term, we use that $V \geq 0$ is non-increasing on $(0, \infty)$ together with \eqref{pf:32} to estimate
\begin{align*}
  \int_{-\infty}^0 V * \nu_n
  &\leq \int_{-\infty}^0 \int_\Omega V(y-z) d \nu_n^+(z) dy \\
  &\leq \int_{-\infty}^0 \sum_{k=0}^\infty V(y-k) \, \nu_n^+([k, k+1]) dy \\
  &\leq  C (  \| T^\beta \nu_n \|_{L^2(\R)}  + 1 ) \sum_{k=0}^\infty \int_k^\infty V,
\end{align*}
where the sum is finite due to Assumption \ref{a:V}\ref{a:V:int}.
\end{proof}

The local bound on $\nu_n^+$ in Lemma \ref{l:nun:bds} turns out useful when passing to the limit $n \to \infty$ in the second and third term in the expression for $F_n$ in \eqref{Fn:full:form}. This is made precise in the following two lemmas.

\begin{lem} \label{l:Vgam:err:term}
There exists $C > 0$ such that for all $n \geq 1$ and all $\nu_n \in \cA_n$
\[
  \bigg| \int_\Omega \gamma_n (V_{\gamma_n} - \delta_0) * \orho_* \, d \sigma_n \bigg|
  \leq C,
\]
where $\sigma_n = (\gamma_n)_\leftarrow \nu_n$ and $\orho_*$ is defined in \eqref{orhoa}.
If
\[
  \sup_{n \geq 1} \sup_{x \geq 0} \nu_n^+ ([x, x+1]) < \infty,
\]
then
\begin{align*} 
\int_\Omega \gamma_n (V_{\gamma_n} - \delta_0) * \orho_* \, d \sigma_n
  &\xto{n \to \infty} 0.
\end{align*}
\end{lem}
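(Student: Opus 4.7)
My plan is to set $h_n(x) := \gamma_n(V_{\gamma_n}*\orho_* - \orho_*)(x)$, so that the integral in question equals $\int_\Omega h_n\,d\sigma_n$. The change of variables $w=\gamma_n y$ in the convolution gives
\[
  h_n(x) = \int_\R V(w)\,\gamma_n\bigl[\orho_*(x-w/\gamma_n)-\orho_*(x)\bigr]\,dw.
\]
Since $\orho_*$ is Lipschitz (the constant extension to $(-\infty,0)$ preserves the constant $L:=\Lip\rho_*$) and $\int_\R|w|V(w)\,dw<\infty$ by Assumption~\ref{a:V}\ref{a:V:int}, the integrand is pointwise dominated by $L|w|V(w)\in L^1(\R)$. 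This yields the $n$-uniform bound $\|h_n\|_\infty\le L\int_\R|w|V(w)\,dw =: C_1$. Combined with $|\sigma_n|(\Omega)\le\mu_n(\Omega)+\rho_*(\Omega)\le 3$, this proves the first assertion.

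For the second assertion, I would first observe that $h_n(x)\to 0$ at every $x$ where $\orho_*'(x)$ exists (hence a.e., by Rademacher): dominated convergence with the dominant $L|w|V(w)$ sends the integrand to $-wV(w)\orho_*'(x)$, whose integral vanishes by evenness of $V$. Writing $\sigma_n = \mu_n - \rho_*$, the piece $\int h_n\rho_*\,dx$ tends to $0$ by dominated convergence on the compact support of $\rho_*$, so it remains to show $\int h_n\,d\mu_n\to 0$.

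For this I split $\Omega = [0,K]\cup[K,\infty)$ with $K>R:=\sup\supp\rho_*$. On the tail, $\orho_*$ vanishes on $(R,\infty)$ and the Chebyshev-type bound $\int_M^\infty V\le M^{-1}\int_\R|w|V(w)\,dw$ yields the $n$-uniform decay $|h_n(x)|\le C/(x-R)$ for $x>R$; together with $\mu_n(\Omega)\le 2$ this controls the tail by $2C/(K-R)$, which vanishes as $K\to\infty$. On the bulk, Assumption~\ref{a:U} leaves only finitely many singular points $p_1,\dots,p_m$ of $\orho_*$ (the boundary of $\supp\rho_*$ and possibly $0$), and away from them $\orho_*'$ is uniformly continuous. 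A Taylor expansion in $y$ combined with the evenness of $V_{\gamma_n}$ (which kills the first-order term) then upgrades pointwise convergence to uniform convergence $h_n\to 0$ on $[0,K]\setminus\bigcup_i B(p_i,\delta)$, for any fixed $\delta>0$. Finally, the hypothesis $\sup_n\sup_x\nu_n^+([x,x+1])\le M$ translates into $\mu_n([y,y+1/\gamma_n])\le M/\gamma_n$, so $\mu_n(B(p_i,\delta))\le 2M\delta+M/\gamma_n$; this bounds the contribution from the singular neighborhoods by $C_1\, m\,(2M\delta+M/\gamma_n)$. Choosing $K$ large, then $\delta$ small, then $n\to\infty$ in that order drives everything to zero.

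The delicate point is $\int h_n\,d\mu_n$: since $\mu_n$ is purely atomic, a.e.\ pointwise convergence of $h_n$ does not automatically integrate against it, and without extra hypotheses a single atom sitting close to a singular point of $\orho_*$ could produce an $O(1)$ contribution. The local density bound from the additional hypothesis is precisely what prevents such concentration, while the finiteness of the singular set of $\orho_*$ (via Assumption~\ref{a:U}) is what makes the $\delta$-neighborhood argument effective.
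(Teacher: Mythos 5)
Your proposal is correct and follows essentially the same route as the paper: the uniform bound comes from the Lipschitz constant of $\orho_*$ together with the finite first moment of $V$, and the convergence comes from a Taylor expansion in which evenness of $V$ annihilates the first-order term, combined with the local density bound on $\nu_n^+$ controlling the $\delta$-neighborhoods of the finitely many points where $\orho_*'$ fails to be uniformly continuous. The only cosmetic differences are that you split $\sigma_n=\mu_n-\rho_*$ and introduce a tail cutoff $K$, whereas the paper works directly with $|\sigma_n|$ and obtains $\|u_n\|_{L^\infty(\cO^c)}\to 0$ uniformly over the whole non-compact complement of the bad set (including the tail, where $\orho_*'\equiv 0$), avoiding the extra parameter.
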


\begin{proof}
Let $u_n := \gamma_n (V_{\gamma_n} - \delta_0) * \orho_*$. Since $\orho_*$ is Lipschitz continuous, we obtain
\begin{multline} \label{pf:26}
  \| u_n \|_{L^\infty(\R)}
  = \sup_{x \in \R} \bigg| \int_\R \gamma_n \big( \orho_*(x-y) - \orho_*(x) \big) V(\gamma_n y) \gamma_n \, dy \bigg| \\
  \leq L \int_\R |\gamma_n y| V(\gamma_n y) \gamma_n \, dy
  = L \int_\R |z| V(z) \, dz,
\end{multline}
which is bounded due to Assumption \ref{a:V}\ref{a:V:int}. Hence,
\begin{equation} \label{pf:27}  
  \bigg| \int_\Omega u_n \, d \sigma_n \bigg| 
  \leq \| u_n \|_\infty |\sigma_n|(\Omega)
  \leq C (2 + n^{-1}).
\end{equation}

To prove the convergence statement in Lemma \ref{l:Vgam:err:term}, we improve the bound on $u_n$ in \eqref{pf:26}. Let $\e > 0$ be arbitrary. By \eqref{rhoa} and \eqref{U:supp:assn} there exist finitely many points $y_1, \ldots, y_\ell$ such that $\orho_*'$ is uniformly continuous on $\R \setminus \{y_1, \ldots, y_\ell\}$. Let $I_i := (y_i - \e_0, y_i + \e_0)$ where $\e_0 = \frac\e{2 \ell}$. Then, for all $x \in \cO^c := \R \setminus \cO$,
\[
  \orho_*(x-y) - \orho_*(x)
  = -\rho_*'(x) y + r_x(y)
  \qquad \text{for all } y \in \R,
\]  
where the bounded function $y \mapsto r_x(y) / y$ vanishes as $y \to 0$ uniformly in $x \in \cO^c$. Using this, we estimate, similarly to \eqref{pf:26},
\begin{align*} 
  \| u_n \|_{L^\infty(\cO^c)}
  &= \sup_{x \in \cO^c} \bigg| \int_\R \gamma_n \big( \rho_*'(x) y + r_x(y) \big) V(\gamma_n y) \gamma_n \, dy \bigg| \\
  &\leq \gamma_n \|\rho_*'\|_\infty \bigg| \int_\R y V_{\gamma_n}( y) \, dy \bigg|
  + \sup_{x \in \cO^c} \int_{\{ |z| < \sqrt{ \gamma_n } \}} \Big| \frac{r_x(z / \gamma_n)}{z / \gamma_n} \Big| |z| V(z) \, dz \\
  &\qquad + \sup_{x \in \cO^c} \int_{\{ |z| > \sqrt{ \gamma_n } \}} \Big| \frac{r_x(z / \gamma_n)}{z / \gamma_n} \Big| |z| V(z) \, dz \\
  &\leq 0
  + \bigg( \sup_{x \in \cO^c} \sup_{|z| < \sqrt{ \gamma_n } } \Big| \frac{r_x(z / \gamma_n)}{z / \gamma_n} \Big| \bigg) \int_\R |z| V(z) \, dz
  + C \int_{\{ |z| > \sqrt{ \gamma_n } \}} |z| V(z) \, dz \\
  &\xto{n \to \infty} 0.
\end{align*}
Using this, we sharpen the bound in \eqref{pf:27} by 
\[
  \bigg| \int_\Omega u_n \, d \sigma_n \bigg| 
  \leq \| u_n \|_{L^\infty(\R)} |\sigma_n|(\cO) 
       + \| u_n \|_{L^\infty(\cO^c)} |\sigma_n|(\Omega).
\]
The second term vanishes as $n \to \infty$. For the first term, we note that
\[
  |\sigma_n|(\cO)
  \leq \frac1{\gamma_n} \sum_{i=1}^\ell |\nu_n|(\gamma_n I_i).
\]
Covering each interval $\gamma_n I_i$ with $\lceil \e \gamma_n / \ell \rceil$ many intervals of length $1$, we use the given bound on $\nu_n^+$ to continue this estimate by
\[
  \frac1{\gamma_n} \sum_{i=1}^\ell |\nu_n|(\gamma_n I_i)
  \leq \frac1{\gamma_n} \sum_{i=1}^\ell C \Big( \frac{ \e \gamma_n }{\ell} +1 \Big)
  = C' \e.
\]
In conclusion,
\[
  \limsup_{n \to \infty } \bigg| \int_\Omega u_n \, d \sigma_n \bigg| 
  \leq C \e.
\]
Since $\e > 0$ is arbitrary, Lemma \ref{l:Vgam:err:term} follows.
\end{proof}

\begin{lem} \label{l:force:term}
Let $\nu_n \in \cA_n$ be such that $\nu_n \xweakto v \nu \in \cA$ as $n \to \infty$. If
\[
  \sup_{n \geq 1} \sup_{x \geq 0} \nu_n^+ ([x, x+1]) < \infty,
\]
then (recall $g(x) = \int_x^\infty V$)
\begin{align*}
\int_{-\infty}^0 (V * \nu_n)(x) \, dx
  \xto{n \to \infty} \int_\Omega g \, d\nu.
\end{align*}
\end{lem}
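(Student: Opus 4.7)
\textbf{Proof plan for Lemma \ref{l:force:term}.} The strategy is to first use identity \eqref{force:rewrite} to turn the convolution integral into a single integral against $g$, and then pass to the limit by a truncation argument, using the tail bound \eqref{gnu:UB:l} of Lemma \ref{l:fg:nu} to control mass at infinity.

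Since $\nu_n$ is supported in $\Omega$, an application of Fubini and the evenness of $V$ (i.e.\ exactly the calculation in \eqref{force:rewrite}) gives
\[
  \int_{-\infty}^0 (V \ast \nu_n)(x) \, dx
  = \int_\Omega g \, d\nu_n.
\]
Thus it suffices to prove that $\int_\Omega g \, d\nu_n \to \int_\Omega g \, d\nu$. Note that $g \in C(\Omega)$ (since $V \in L^1(\R)$), with $g(0) = \tfrac12$ and $g(x) \to 0$ as $x \to \infty$, but $g$ is not compactly supported, so vague convergence alone is insufficient. To truncate, fix $M > 0$ and choose a cutoff $\chi_M \in C_c([0,\infty))$ with $0 \leq \chi_M \leq 1$, $\chi_M \equiv 1$ on $[0,M]$ and $\supp \chi_M \subset [0,M+1]$. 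Since $g\chi_M \in C_c(\Omega)$, vague convergence yields
\[
  \int_\Omega g\chi_M \, d\nu_n
  \xto{n \to \infty}
  \int_\Omega g\chi_M \, d\nu.
\]

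To control the tails, I verify uniform local mass bounds and then invoke \eqref{gnu:UB:l}. By hypothesis, $\sup_n \sup_{x \geq 0} \nu_n^+([x,x+1]) < \infty$, while $\nu_n^- = \tilde\rho_*$ (see \eqref{An}) gives $\nu_n^-([x,x+1]) \leq \|\tilde\rho_*\|_\infty = \|\rho_*\|_\infty$. Similarly, since $\nu \in \cA$, the definition \eqref{A} supplies $\sup_{x \geq 0} \nu^+([x,x+1]) < \infty$ and $\nu^- \leq \rho_*(0)\cL$. Hence there exists $K < \infty$ with
\[
  \sup_n \sup_{x \geq 0} |\nu_n|([x,x+1])
  + \sup_{x \geq 0} |\nu|([x,x+1])
  \leq K.
\]
Applying \eqref{gnu:UB:l} (and $0 \leq 1 - \chi_M \leq \mathbf 1_{[M,\infty)}$) gives
\[
  \bigg| \int_\Omega g(1 - \chi_M) \, d\nu_n \bigg|
  \leq \int_M^\infty g \, d|\nu_n|
  \leq K \e_M,
\]
and the same bound for $\nu$. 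Combining with the convergence of the truncated integrals,
\[
  \limsup_{n \to \infty} \bigg| \int_\Omega g \, d\nu_n - \int_\Omega g \, d\nu \bigg|
  \leq 2 K \e_M,
\]
and sending $M \to \infty$ (using $\e_M \to 0$) completes the proof.

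There is no serious obstacle here; the only slightly delicate point is verifying the uniform local boundedness of $|\nu_n|$, which is immediate from the structure of $\cA_n$ combined with the hypothesis on $\nu_n^+$. The tail estimate \eqref{gnu:UB:l} from Lemma \ref{l:fg:nu} is precisely the tool engineered for replacing $g \notin C_c$ by a $C_c$ truncation inside a vague-convergence argument.
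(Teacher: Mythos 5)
Your proof is correct and follows essentially the same route as the paper's: rewrite via \eqref{force:rewrite}, split with a compactly supported cutoff so vague convergence handles the truncated part, and control the tails uniformly with the estimate \eqref{gnu:UB:l}. The only small addition is your explicit verification of the uniform local bound on $|\nu_n|$ (via $\nu_n^- = \tilde\rho_*$), which the paper leaves implicit.
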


\begin{proof}
We follow the proof in \cite{GarroniVanMeursPeletierScardia16} for the continuum setting and present it here in more detail. Take a continuous cut-off function 
\begin{equation} \label{psiM}  
  \psi_M : [0, \infty) \to [0,1]
  \qtext{with} \psi_M(x) = \left\{ \begin{array}{ll}
    1
    &\text{if } x \leq M  \\
    0
    &\text{if } x \geq M + 1.
  \end{array} \right.
\end{equation}
We recall from \eqref{force:rewrite} that
\begin{equation} \label{pf:12}
  \int_{-\infty}^0 (V * \nu_n)(x) \, dx
  = \int_\Omega g \, d\nu_n
  = \int_\Omega g \psi_M \, d\nu_n + \int_{M}^\infty g (1-\psi_M) \, d\nu_n,
\end{equation}
where $M > 0$ is an arbitrary constant. Since $g \psi_M \in C_c([0,\infty))$, we obtain from $\nu_n \xweakto v \nu$ that
\[
  \int_\Omega g \psi_M \, d\nu_n
  \xto{n \to \infty} \int_\Omega g \psi_M \, d\nu.
\]
For the second term in \eqref{pf:12}, we use \eqref{gnu:UB:l} to estimate
\[
  \bigg| \int_{M}^\infty g (1-\psi_M) \, d\nu_n \bigg| 
  \leq \int_{M}^\infty g \, d|\nu_n|
  \leq \e_M \sup_{x \geq 0} | \nu_n | ([x, x+1])
  \leq C \e_M,
\]
where $\e_M \to 0$ as $M \to \infty$.
By a similar argument, it follows from $\nu \in \cA$ that
\[
  \bigg| \int_{M}^\infty g (1-\psi_M) \, d\nu \bigg| 
  \leq C \e_M.
\]
Tracing these observations back to \eqref{pf:12}, we conclude 
\[
  \int_{-\infty}^0 (V * \nu_n)(x) \, dx
  \xto{n \to \infty} \int_\Omega g \psi_M \, d\nu + \int_{M}^\infty g (1-\psi_M) \, d\nu + C \e_M
  = \int_\Omega g \, d\nu + C \e_M.
\]
Since $M$ is arbitrary, Lemma \ref{l:force:term} follows.
\end{proof}

\section{Proof of Theorem \ref{t}}
\label{s:t:pf}

This section is devoted to the proof of Theorem \ref{t}. Theorem \ref{t} consists of three statements: compactness, the liminf inequality and the limsup inequality. We prove these three statements respectively in Sections \ref{s:t:pf:Cpss}, \ref{s:t:pf:limf} and \ref{s:t:pf:limp} for the power-law case $a > 0$ (see Assumption \ref{a:V}\ref{a:V:sing}). In Section \ref{s:t:pf:a0} we show that with minor modifications the proof for the logarithmic case $a = 0$ follows.

\subsection{Compactness}
\label{s:t:pf:Cpss}

Let $\nu_n \in \cA_n$ be such that $\sup_{n \geq 1} F_n(\nu_n) < \infty$. We start from the expression for $F_n(\nu_n)$ in \eqref{Fn:full:form}. Since the fourth and fifth term in \eqref{Fn:full:form} are nonnegative (recall \eqref{EL}), we may neglect them. By Lemma \ref{l:Vgam:err:term} the third term is uniformly bounded.
Hence, it is sufficient to focus on the first two terms in \eqref{Fn:full:form}, which we label $\mathsf F_n(\nu_n)$.

Recalling the regularization $V^\beta$ defined in \eqref{Vbeta}, we expand
\begin{align} \notag
  \mathsf F_n(\nu_n)
  &= \frac{1}{2} \iint_{\Delta^c} V ( x - y ) \, d \nu_n(y) d \nu_n(x)
    - \rhomin(0) \int_{-\infty}^0 (V * \nu_n) \\\notag
  &= \frac{1}{2} \iint_{\Delta^c} (V - V^\beta) ( x - y ) \, d \nu_n(y) d \nu_n(x)
    + \frac{1}{2} \iint_{\R^2} V^\beta ( x - y ) \, d \nu_n(y) d \nu_n(x) \\\label{pf:4}
  & \qquad  - \frac{n+1}{2n^2} \gamma_n^2 V^\beta(0)
    - \rhomin(0) \int_{-\infty}^0 (V * \nu_n).
\end{align}
For the second term, note from Lemma \ref{l:Tbeta}\ref{l:Tbeta:Tnu2:Vnunu} that 
\begin{align*}
  \iint_{\R^2} V^\beta ( x - y ) \, d \nu_n(y) d \nu_n(x)
  = \| T^\beta \nu_n \|_2^2.
\end{align*}
The first and third terms in \eqref{pf:4} can be bounded from below by small constants. Indeed, using $V^\beta(0) \leq C/\beta^a$, we bound the third term by
\[
  -\frac{n+1}{2n^2} \gamma_n^2 V^\beta(0)
  \geq  - C \frac{\gamma_n^2}n \beta^{-a}.
\]
For the first term in \eqref{pf:4}, we recall $W^\beta = V - V^\beta$ and expand $\nu_n = \tilde \mu_n - \tilde \rho_*$:
\begin{multline*}
  \frac{1}{2} \iint_{\Delta^c} W^\beta ( x - y ) \, d \nu_n(y) d \nu_n(x) \\
  = \frac{1}{2} \iint_{\Delta^c} W^\beta ( x - y ) \, d \tilde \mu_n(y) d \tilde \mu_n(x)
    - \int_\Omega (W^\beta * \tilde \rho_*) \, d \tilde \mu_n
    + \frac12 \int_\Omega (W^\beta * \tilde \rho_*) \tilde \rho_*. 
\end{multline*}
Since $W^\beta \geq 0$, the first and third term are nonnegative. Using $\int_\R W^\beta \leq C \beta^{1-a}$, we estimate the second term by
\begin{equation*}
  - \int_\Omega (W^\beta * \tilde \rho_*) \, d \tilde \mu_n
  \geq - \int_\Omega \bigg( \int_\R W^\beta \bigg) \| \tilde \rho_* \|_\infty \, d \tilde \mu_n
  \geq - C \gamma_n \beta^{1-a}.
\end{equation*}
Hence, the first and third term in \eqref{pf:4} are bounded from below by
\[
  - C \beta^{-a} \gamma_n \Big( \beta +  \frac{\gamma_n}n \Big).
\]
We choose $\beta$ such that this quantity is maximal. Up to a constant, this yields
\[
  \beta = \beta_n := \frac{\gamma_n}n.
\]
Then, by the assumption on $\gamma_n$ in Theorem \ref{t}, we obtain
\begin{align*} %\label{pf:40}
  C \beta_n^{-a} \gamma_n \Big( \beta_n +  \frac{\gamma_n}n \Big)
  = \underbrace{ 2C \gamma_n \Big( \frac{\gamma_n}n \Big)^{1-a} }_{=: \e_n}
  \xto{n \to \infty} 0.
\end{align*} 
Collecting these estimates, we obtain from \eqref{pf:4} that
\begin{align} \label{pf:5}
  \mathsf F_n(\nu_n)
  \geq \frac12 \| T^{\beta_n} \nu_n \|_2^2 - \e_n  - \rhomin(0) \int_{-\infty}^0 (V * \nu_n).
\end{align} 

For the fourth term in \eqref{pf:5}, we use a rougher estimate. By Lemma \ref{l:nun:bds},
\begin{equation*}
  \rhomin(0) \int_{-\infty}^0 (V * \nu_n)
  \leq C (\| T^{\beta_n} \nu_n \|_2 + 1)
\end{equation*}
for all $n$ large enough. Then, together with the first two terms in \eqref{pf:5}, we obtain
\begin{align} \label{pf:6}
  \mathsf F_n(\nu_n)
  \geq \frac12 \| T^{\beta_n} \nu_n \|_2^2 - C'.
\end{align}

By \eqref{pf:6}, $\| T^{\beta_n} \nu_n \|_2$ is bounded. This implies two useful properties. First,
\begin{align*} %\label{pf:2}
  T^{\beta_n} \nu_n \weakto \phi
\end{align*}
in $L^2(\R)$ as $n \to \infty$ along a subsequence (not relabelled) for some $\phi \in L^2(\R)$. Then, by Lemma \ref{l:nun:bds}, $\sup_{n \geq 1} |\nu_n|([0,M])$ is bounded for any $M > 0$. Hence,
\begin{align*}
  \nu_n \xweakto v \nu \in \cM(\Omega)
\end{align*}
along a further subsequence as $n \to \infty$. 

It is left to show that $\nu \in \cA$, i.e.\
\begin{align} \label{pf:14}
  \nu^- \leq \rhomin (0) \cL
  \qand
	     \sup_{x \geq 0 } \nu^+([x,x+1]) < \infty.
\end{align}
Since $\tilde \rho_*(x) = \rhomin (x/\gamma_n)$ and $\rho_*$ is continuous, it follows that
\[
  \nu_n^- = \tilde \rho_* \xweakto v \rhomin(0) \cL
\]
as $n \to \infty$. Then, together with $\nu_n \xweakto v \nu$, we obtain
\[
  \nu_n^+ 
  = \nu_n + \nu_n^- 
  \xweakto v \nu + \rhomin(0) \cL
\]
as $n \to \infty$. In particular, $\nu + \rhomin(0) \cL \geq 0$, which shows the first statement in \eqref{pf:14}. To prove the second statement, we take $x \geq 0$ arbitrary, and take a test function $\varphi \in C_c(\Omega)$ which satisfies $0 \leq \varphi \leq 1$ and 
\[
  \varphi(y) 
  = \left\{ \begin{array}{ll}
    1
    &\text{if } x \leq y \leq x+1  \\
    0
    &\text{if } y \leq x - 1 \text{ or } y \geq x + 2.
  \end{array} \right.
\] 
Then,
\begin{align*}
  \nu^+([x,x+1])
  &\leq \int_{x-1}^{x+2} \varphi \, d \nu^+
  = \int_{x-1}^{x+2} \varphi \, d \nu + \int_{x-1}^{x+2} \varphi \, d \nu^- \\
  &\leq \lim_{n \to \infty} \int_{x-1}^{x+2} \varphi \, d \nu_n + \rhomin(0) \int_{x-1}^{x+2} \varphi(x) \, dx \\
  &\leq \limsup_{n \to \infty} \bigg( \sum_{k = 0}^2 |\nu_n| ([ x + k-1, x + k ]) \bigg) + 3 \rhomin(0),
\end{align*}
which by Lemma \ref{l:nun:bds} and \eqref{pf:6} is bounded uniformly in $x$. This implies the second statement in \eqref{pf:14}.

\subsection{Liminf inequality}
\label{s:t:pf:limf}

To prove the liminf inequality in Theorem \ref{t}, let $\nu_n \xweakto v \nu$ be given.
We may assume that $F_n(\nu_n)$ is bounded along a subsequence in $n$ (not relabelled), as otherwise the liminf inequality is trivial. Then, as in the compactness proof, we obtain \eqref{pf:6}, which shows that $(T^{\beta_n} \nu_n )_n$ is bounded in $L^2(\R)$. Then, Lemma \ref{l:nun:bds} implies
\begin{equation} \label{pf:28}
  \sup_{n \geq 1} \sup_{x \geq 0} \nu_n^+([x,x+1]) \leq C.
\end{equation}

Let $\mathsf F_n$ be as in \eqref{pf:4}. First, we observe that
\begin{equation} \label{pf:29}
  \liminf_{n \to \infty} F_n(\nu_n)
  \geq \liminf_{n \to \infty} \mathsf F_n(\nu_n).
\end{equation}
Indeed, in the compactness proof we already showed that the fourth and fifth term in \eqref{Fn:full:form} are nonnegative. By \eqref{pf:28} and Lemma \ref{l:Vgam:err:term}, the third term vanishes as $n \to \infty$.

Next we bound the right-hand side in \eqref{pf:29} from below. As in the proof for the compactness, we obtain \eqref{pf:5}. Together with Lemma \ref{l:force:term} this yields
\begin{align*} %\label{pf:11}
  \liminf_{n \to \infty} \mathsf F_n(\nu_n)
  \geq \frac12 \liminf_{n \to \infty} \| T^{\beta_n} \nu_n \|_2^2 - \rhomin(0) \int_\Omega g \, d\nu.
\end{align*}
In particular, $T^{\beta_n} \nu_n \weakto \phi$ in $L^2(\R)$ as $n \to \infty$ along a subsequence to some $\phi \in L^2(\R)$. It is left to prove that $\phi = T\nu$. With this aim, let $\varphi \in C_c^\infty(\R)$ be a test function. Using \eqref{cAn:in:X10p} and Lemma \ref{l:Tbeta}\ref{l:Tbeta:BLSO} we obtain   
\begin{align} \label{pf:7}
  \int_\R \varphi (T^{\beta_n} \nu_n)
  = \int_\Omega (T^{\beta_n} \varphi) \, d\nu_n
  = \int_0^{M+1} (T^{\beta_n} \varphi) \psi_M \, d\nu_n + \int_M^\infty (T^{\beta_n} \varphi) (1-\psi_M) \, d\nu_n,
\end{align}
where $\psi_M$ is the cut-off function introduced in \eqref{psiM}, and $M > 0$ is an arbitrary constant. We pass to the limit $n \to \infty$ in both terms separately. 

For the first term in \eqref{pf:7}, we note from Lemma \ref{l:Tbeta}\ref{l:Tbeta:TTbetag:bd} that $(T^{\beta_n} \varphi) \psi_M \to (T \varphi) \psi_M$ in $C([0,M+1])$ as $n \to \infty$. Together with $\nu_n \xweakto v \nu$ this yields
\[
  \int_0^{M+1} (T^{\beta_n} \varphi) \psi_M \, d\nu_n
  \xto{n \to \infty} \int_0^{M+1} (T \varphi) \psi_M \, d\nu.
\]
For the second term in \eqref{pf:7}, we set $\opsi_M := 1-\psi_M$ and obtain from \eqref{fnu:UB:l}, Lemma \ref{l:nun:bds}, $\| T^{\beta_n} \nu_n \|_2 \leq C$ and the triangle inequality that 
\begin{align} \notag  
  \bigg| \int_M^\infty (T^{\beta_n} \varphi) \opsi_M \, d\nu_n \bigg|
  &\leq C \big\| \opsi_M T^{\beta_n} \varphi \big\|_{X_{1,2}} \sup_{x \geq 0} | \nu_n | ([x, x+1]) \\\label{pf:8}
  &\leq C' \big( \|  \opsi_M (T - T^{\beta_n}) \varphi \|_{X_{1,2}} + \| \opsi_M T \varphi \|_{X_{1,2}} \big).
\end{align}
For the second term, we set $f := T\varphi$ and compute
\begin{align*}
  \| f \opsi_M \|_{X_{1,2}}^2
  &= \int_M^\infty (x^4 + 1) f(x)^2 \opsi_M(x)^2 \, dx
    + \int_M^\infty \big( f \opsi_M \big)'(x)^2 \, dx \\
  &\leq \int_M^\infty (x^4 + 1) f(x)^2 \, dx
    + C \| \opsi_M \|_{W^{1,\infty}(\R)}^2 \int_M^\infty \big( f(x)^2 + f'(x)^2 \big) \, dx.
\end{align*} 
Since $f \in X_{1,2}$, this value vanishes as $M \to \infty$. For the first term in \eqref{pf:8}, we obtain from Lemma \ref{l:Tbeta}\ref{l:Tbeta:TTbetag:bd} that
\begin{align*}
  \| \opsi_M (T - T^{\beta_n}) \varphi  \|_{X_{1,2}}
  \leq C \| \opsi_M \|_{W^{1,\infty}(\R)} \| (T - T^{\beta_n}) \varphi \|_{X_{1,2}}
  \leq C' \beta_n^{1-a},
\end{align*}
which vanishes as $n \to \infty$ uniformly in $M$. In conclusion, by tracing these observations back to \eqref{pf:7}, we obtain
\begin{align} \label{pf:9}
  \int_\R \varphi (T^{\beta_n} \nu_n)
  \xto{n \to \infty} \int_0^{M+1} (T \varphi) \psi_M \, d\nu + c_M,
\end{align}
where $c_M \to 0$ as $M \to \infty$.

Next we pass to the limit $M \to \infty$. Since also $\sup_{x \geq 0} |\nu|([x, x+1])$ is bounded, the computation in \eqref{pf:8} shows that
\[
  \bigg| \int_M^\infty (T \varphi) \opsi_M \, d\nu \bigg|
  \leq C \| (T \varphi) \opsi_M \|_{X_{1,2}}
  \xto{M \to \infty} 0.
\]
Hence, the right-hand side in \eqref{pf:9} can be cast into
\[
  \int_0^{M+1} (T \varphi) \psi_M \, d\nu + \int_M^\infty (T \varphi) (1-\psi_M) \, d\nu + o(1)
  = \int_\R (T \varphi) \, d\nu + o(1)
\]
as $M \to \infty$. Finally, by Lemma \ref{l:T} we conclude that
$ %\begin{equation} \label{pf:10}  
  \phi = T\nu.
$ %\end{equation}

\subsection{Limsup inequality}
\label{s:t:pf:limp}

We first assume $\rho_*(0) > 0$ and treat the special case $\rho_*(0) = 0$ afterwards. Since $F$ is the same as in \cite{GarroniVanMeursPeletierScardia16} and only depends on $U$ through the constant $\rho_*(0)$, we may use the density result in \cite{GarroniVanMeursPeletierScardia16}. This result states that it is sufficient to prove the limsup inequality only for those $\nu \in \cA$ for which $\nu \in L^2(\R)$, $\supp \nu \subset [0,M]$ for some $M > 0$ and $\nu^- \leq \rho_*(0) - \delta$ on $[0, M]$ for some $\delta > 0$. In particular, we treat $\nu$ as a density. Since $F$ is continuous as a functional on $L^2 (\R)$, we may further assume that $\nu \in C^1(\Omega)$.

We first treat the case $\int_\Omega \nu \geq 0$, and comment on the case $\int_\Omega \nu < 0$ afterwards. To choose $\nu_n \in \cA_n$, we note from \eqref{An} that $\cA_n$ can be parametrized by $\bx \in \Omega_n$ through
\begin{equation} \label{pf:3}
  \nu_n = \frac{\gamma_n}n \sum_{i=0}^n \delta_{x_i} - \tilde \rho_*,
\end{equation}
where we recall that $\tilde \rho_*(x) = \rho_*(x / \gamma_n)$ depends on $n$.
We choose $x_i$ such that
\begin{equation} \label{pf:15}
  \int_{x_{i-1}}^{x_i} \nu(x) + \tilde \rho_*(x) \, dx = \frac{\gamma_n}n
  \qquad \text{for all } i = 1,\ldots,n.
\end{equation}
Note from $\int_\Omega (\nu + \tilde \rho_*) = \gamma_n + \int_\Omega \nu \geq \gamma_n$ that such an $\bx \in \Omega_n$ exists. By taking $n$ large enough, we may further assume that $\nu + \tilde \rho_* \geq 0$ and that
\begin{align} \label{pf:24}
  x_i \in \supp \tilde \rho_*
  \qquad \text{for all } i = 0,\ldots,n.
\end{align}

Next we prove several properties of $\nu_n$ and $\bx$. We observe from \eqref{pf:15} that
\begin{align} \label{pf:19}
  x_i - x_{i-1} 
  \geq  \frac{\gamma_n}{n \big( \| \nu \|_\infty + \| \rho_* \|_\infty \big)}
  \qquad \text{for all } i = 1,\ldots,n.
\end{align}
Hence,
\begin{equation} \label{pf:17}
  \sup_{x \geq 0} \nu_n^+ ([x, x+1])
  = \sup_{x \geq 0} \big( \# \{ x_i \in [x,x+1] \} \big) \frac{ \gamma_n }n
  \leq C.
\end{equation}
To prove
\begin{equation} \label{pf:16}
  \nu_n \xweakto v \nu \qquad \text{as } n \to \infty,
\end{equation}
we take a test function $\varphi \in C_c(\Omega)$ and set $N := \max \supp \varphi$. Taking $n$ large enough such that $\tilde \rho_* \geq \frac \delta2$ on $[0, N+1]$ and $\nu + \tilde \rho_* \geq \frac \delta2$ on $[0,M]$, we note from \eqref{pf:15} that
\begin{align} \label{pf:41}
  x_i - x_{i-1} 
  \leq \frac2\delta \frac{\gamma_n}n
  \qquad \text{for all $i$ such that } x_{i-1} < N,
\end{align}
which vanishes as $n \to \infty$.
Then, from
\begin{align*}
  \int_\Omega \varphi \, d(\nu_n - \nu)
  &= \int_0^N \varphi \, d \tilde \mu_n - \int_0^N \varphi (\nu + \tilde \rho_*) \\
  &= \sum_{i \, : \, x_{i-1} < N} \int_{x_{i-1}}^{x_i} \big( \varphi(x_{i-1}) - \varphi(x) \big) (\nu + \tilde \rho_*)(x) \, dx \\
  &\leq  \max_{i \, : \, x_{i-1} < N} \Big( \max_{x \in [x_{i-1}, x_i]} \big| \varphi(x_{i-1}) - \varphi(x) \big| \Big)
      \int_0^N (\nu + \tilde \rho_*)
\end{align*}
we obtain by the continuity of $\varphi$ that the right-hand side vanishes as $n \to \infty$. This proves \eqref{pf:16}. 

Next we prove the limsup inequality in Theorem \ref{t} for $\nu_n$ as constructed above. With this aim, we treat all five terms of $F_n$ in \eqref{Fn:full:form} separately. The latter four terms all converge as $n \to \infty$. Indeed, the fifth term in \eqref{Fn:full:form} vanishes as $n \to \infty$. By \eqref{pf:24} the fourth term equals $0$. Due to \eqref{pf:17}, Lemma \ref{l:Vgam:err:term} implies that the third term vanishes as $n \to \infty$. Due to \eqref{pf:17} and \eqref{pf:16}, Lemma \ref{l:force:term} guarantees the convergence of the second term. 

Therefore, it is sufficient to prove the limsup inequality only for the first term in \eqref{Fn:full:form}. We first show that for all $\beta > 0$ small enough
\begin{equation} \label{pf:20}  
  \limsup_{n \to \infty} \iint_{\Delta^c} V ( x - y ) \, d \nu_n(y) d \nu_n(x)
  \leq \int_{\Omega} (V^\beta * \nu) d \nu + C \beta^{1-a}.
\end{equation}
Let $I$ be the smallest integer for which $x_I \geq M$. Using that $V$ is even, we expand
\begin{subequations} \label{pf:18}
\begin{align} \notag
  &\iint_{\Delta^c} V ( x - y ) \, d \nu_n(y) d \nu_n(x) \\\label{pf:18a}
  &= \iint_{[0,x_I]^2} V^\beta ( x - y ) \, d \nu_n(y) d \nu_n(x)
    - (I+1) \Big( \frac{\gamma_n}n \Big)^2 V^\beta(0) \\\label{pf:18b}
  &\quad + \iint_{[0,x_I]^2 \setminus \Delta} W^\beta ( x - y ) \, d \nu_n(y) d \nu_n(x) \\\label{pf:18c}
  &\quad  + 2 \int_{(x_I, x_n]} \int_{[0,x_I]} V ( x - y ) \, d \nu_n(y) d \nu_n(x) \\\label{pf:18d}
  &\quad  + \iint_{(x_I, x_n]^2 \setminus \Delta} V ( x - y ) \, d \nu_n(y) d \nu_n(x) \\\label{pf:18e}
  &\quad  + 2 \int_{[0,x_n]} \int_{(x_n, \infty)} V ( x - y ) \, d \nu_n(y) d \nu_n(x) \\\label{pf:18f}
  &\quad  + \iint_{(x_n, \infty)^2 \setminus \Delta} V ( x - y ) \, d \nu_n(y) d \nu_n(x).
\end{align}
\end{subequations}
The second term in \eqref{pf:18a} is negative; we simply bound it from above by $0$. By \eqref{pf:16} and $\nu, \rho_* \in L^\infty(\R)$, we obtain that
$\nu_n |_{[0,x_I]} \xweakto v \nu |_{[0,M]}$ as $n \to \infty$. By \cite[Thm.\ 1.59]{AmbrosioFuscoPallara00} we then also have that $(\nu_n \otimes \nu_n) |_{[0,x_I]^2} \xweakto v (\nu \otimes \nu) |_{[0,M]^2}$ as $n \to \infty$. Since $V^\beta$ is continuous, this implies for the first term in \eqref{pf:18a} that
\[
  \iint_{[0,x_I]^2} V^\beta ( x - y ) \, d \nu_n(y) d \nu_n(x)
  \xto{n \to \infty} \iint_{[0,M]^2} V^\beta ( x - y ) \, d \nu(y) d \nu(x),
\]
which equals the integral in the right-hand side of \eqref{pf:20}. 

Next we bound \eqref{pf:18b}. Neglecting the negative cross terms, we estimate
\begin{multline} \label{pf:10}
  \iint_{[0,x_I]^2 \setminus \Delta} W^\beta ( x - y ) \, d \nu_n(y) d \nu_n(x) \\
  \leq \iint_{[0,x_I]^2 \setminus \Delta} W^\beta ( x - y ) \, d \nu_n^+(y) d \nu_n^+(x)
      + \iint_{[0,x_I]^2} W^\beta ( x - y ) \, d \tilde \rho_* (y) d \tilde \rho_* (x).
\end{multline}
For the second term, we recall from Lemma \ref{l:Vbeta}\ref{l:Vbeta:L1Linf} that $\int_\R W \leq C \beta^{1-a}$. Since $x_I < M+1$ for $n$ large enough, we obtain
\begin{align*}
  \iint_{[0,x_I]^2} W^\beta ( x - y ) \, d \tilde \rho_* (y) d \tilde \rho_* (x)
  \leq \int_0^{M+1} \bigg( \int_\R W^\beta \bigg) \| \rho_* \|_\infty \, d \tilde \rho_* (x)
  \leq C (M+1) \| \rho_* \|_\infty^2 \beta^{1-a}.
\end{align*}
The first term in \eqref{pf:10} is the discrete counterpart of the second term, and can be treated similarly. Relying on \eqref{pf:19} and $I = O(n / \gamma_n)$, we bound it by
\begin{multline*}
  \iint_{[0,x_I]^2 \setminus \Delta} W^\beta ( x - y ) \, d \nu_n^+(y) d \nu_n^+(x)
  = 2 \Big( \frac{\gamma_n}n \Big)^2 \sum_{i=0}^{I-1} \sum_{k=1}^{I-i} W^\beta (x_{i+k} - x_i) \\
  \leq C \frac{\gamma_n}n \sum_{i=0}^{I-1} \sum_{k=1}^\infty c \frac{\gamma_n}n W^\beta \Big( c \frac{\gamma_n}n k \Big)
  \leq C' \int_0^\infty W^\beta
  = C'' \beta^{1-a}.
\end{multline*}
Hence, \eqref{pf:18b} is bounded by $C \beta^{1-a}$ uniformly in $n$. In view of \eqref{pf:20}, it is therefore left to show that the limsup of the remaining terms in \eqref{pf:18} are nonpositive.

We start with \eqref{pf:18d}. Expanding $\nu_n = \nu_n^+ - \tilde \rho_*$ and using that $V$ is even, we rewrite 
\begin{align} \notag
  &\iint_{(x_I, x_n]^2 \setminus \Delta} V ( x - y ) \, d \nu_n(y) d \nu_n(x) \\\notag
  &= 2 \frac{\gamma_n}n \sum_{i=I+1}^n \bigg( \frac{\gamma_n}n \sum_{j=I+1}^{i-1} V(x_i - x_j) - \sum_{j=I+1}^i \int_{x_{j-1}}^{x_j} V(x_i - x) \tilde \rho_*(x) \, dx \bigg) \\\notag
  &\quad + 2 \sum_{i=I+1}^n \int_{x_{i-1}}^{x_i} \bigg( \sum_{j=I+1}^{i-1} \int_{x_{j-1}}^{x_j} V(x - y) \tilde \rho_*(y) \, dy - \frac{\gamma_n}n \sum_{j=I+1}^{i-1} V(x - x_j) \bigg) \tilde \rho_*(x) \, dx \\\notag
  &\quad + \sum_{i=I+1}^n \int_{x_{i-1}}^{x_i} \int_{x_{i-1}}^{x_i} V(x - y) \tilde \rho_*(y) \tilde \rho_*(x) \, dx \\\label{pf:23}
  &=: T_1 + T_2 + T_3.
\end{align}
Using \eqref{pf:15} and the fact that $V$ is decreasing, we obtain for the integrals inside the parentheses that
\begin{equation} \label{pf:38}
  \int_{x_{j-1}}^{x_j} V(x_i - x) \tilde \rho_*(x) \, dx
  \geq V(x_i - x_{j-1}) \int_{x_{j-1}}^{x_j} \tilde \rho_*(x) \, dx
  = \frac{\gamma_n}n V(x_i - x_{j-1})
\end{equation}
and, similarly,
\[
  \int_{x_{j-1}}^{x_j} V(x - y) \tilde \rho_*(y) \, dy
  \leq \frac{\gamma_n}n V(x - x_j)
\]
for all $x \in (x_{i-1}, x_i)$. Hence, $T_1, T_2 \leq 0$. For $T_3$, we observe from $V(x) \leq C |x|^{-a}$ and \eqref{pf:15} that
\begin{multline} \label{pf:22} 
  \int_{x_{i-1}}^{x_i} V(x - y) \tilde \rho_*(y) dy
  \leq 2 \int_0^{\gamma_n / (2 n \| \rho_* \|_\infty)} V(z) \| \rho_* \|_\infty \, dz \\
  \leq C \int_0^{\gamma_n / (2 n \| \rho_* \|_\infty)} \frac 1{z^a} \, dz
  = C' \Big( \frac{\gamma_n}n \Big)^{1-a}.
\end{multline}
Hence, $T_3 \leq C' \gamma_n^{2-a} / n^{1-a}$, which by the assumption on $\gamma_n$ in Theorem \ref{t} vanishes as $n \to \infty$. In conclusion, the limsup of \eqref{pf:18d} is nonpositive.

For \eqref{pf:18f}, we note that $(x_n, \infty)$ is disjoint with $\supp \nu_n^+$. Hence,
\begin{equation} \label{pf:21}  
  \iint_{(x_n, \infty)^2 \setminus \Delta} V ( x - y ) \, d \nu_n(y) d \nu_n(x)
  = \iint_{(x_n, \infty)^2} V ( x - y ) \tilde \rho_* (y) \tilde \rho_* (x) \, dy dx.
\end{equation}
Then, since
\begin{equation} \label{pf:30}  
  \int_{x_n}^\infty \tilde \rho_*
  = \int_\R \tilde \rho_* - \int_0^{x_n} (\tilde \rho_* + \nu) + \int_0^{x_n} \nu
  = \int_\Omega \nu 
  = C 
  \geq 0,
\end{equation}
we get $\int_{x_n / \gamma_n}^\infty \rho_* = C / \gamma_n$. Since $\rho_*$ is Lipschitz continuous, this implies that
\[
  \| \tilde \rho_* \|_{L^\infty( x_n, \infty )}
  = \| \rho_* \|_{L^\infty( x_n / \gamma_n, \infty )}
  \leq C' / \sqrt{\gamma_n}.
\]
Then, similarly to \eqref{pf:22}, we estimate the right-hand side of \eqref{pf:21} as 
\begin{align*}
  \iint_{(x_n, \infty)^2} V ( x - y ) \tilde \rho_* (y) \tilde \rho_* (x) \, dy dx
  \leq \int_{x_n}^\infty \bigg( 2 \int_0^{C \sqrt{\gamma_n}} V(z) \frac {C'}{\sqrt{\gamma_n}} \, dz \bigg)  \tilde \rho_* (x) \, dx
  \leq \frac{C''}{ \gamma_n^{3/2} } \|V\|_1,
\end{align*}
which vanishes as $n \to \infty$.

Next we treat \eqref{pf:18c}. We split $\nu_n = (\nu_n - \nu) + \nu$ in the inner integral. For the first part, we expand as in \eqref{pf:23}, 
\begin{align*} \notag
  &\int_{(x_I, x_n]} \int_{[0,x_I]} V ( x - y ) \, d (\nu_n - \nu) (y) d \nu_n(x) \\\notag
  &= \frac{\gamma_n}n \sum_{i=I+1}^n \bigg( \frac{\gamma_n}n \sum_{j=0}^I V(x_i - x_j) - \sum_{j=1}^I \int_{x_{j-1}}^{x_j} V(x_i - x) (\nu + \tilde \rho_*)(x) \, dx \bigg) \\\notag
  &\quad + \sum_{i=I+1}^n \int_{x_{i-1}}^{x_i} \bigg( \sum_{j=1}^I \int_{x_{j-1}}^{x_j} V(x - y) (\nu + \tilde \rho_*)(y) \, dy - \frac{\gamma_n}n \sum_{j=0}^I V(x - x_j) \bigg) \tilde \rho_*(x) \, dx \\%\label{pf:23}
  &=: T_4 + T_5.
\end{align*}
A similar argument as that in \eqref{pf:38} yields $T_5 \leq 0$ and 
\[
  T_4 
  \leq \frac{\gamma_n}n \sum_{i=I+1}^n \frac{\gamma_n}n V(x_i - x_I).
\]
Noting from \eqref{pf:19} that 
\[
  \sum_{i=I+1}^n V(x_i - x_I)
  \leq \sum_{k=1}^\infty V( c k \gamma_n / n )
  \leq \frac1c \frac n{\gamma_n} \int_0^\infty V
  = C \frac n{\gamma_n},
\]
we obtain $\limsup_{n \to \infty} T_4 \leq 0$.

The second part of \eqref{pf:18c} equals (setting $f := V*\nu \in \Lip(\Omega)$)
\begin{equation} \label{pf:42}
  \int_{(x_I, x_n]} \int_{[0,x_I]} V ( x - y ) \nu (y) \, d \nu_n(x)
  = \int_{(x_I, x_n]} f \, d\nu_n
  = \sum_{i=I+1}^n \int_{x_{i-1}}^{x_i} \big( f(x_i) - f(x) \big) \tilde \rho_*(x) \, dx.
\end{equation}
To estimate this in absolute value, we split the sum in two parts. Let $J = \lceil n / \sqrt{\gamma_n} \rceil$ and take $n$ large enough such that $J - I - 1 \geq c n / \sqrt{\gamma_n}$ (recall that $I = O(n / \gamma_n)$) and such that \eqref{pf:41} holds for all $i \leq J$. Then,
\begin{multline*}
  \sum_{i=I+1}^J \int_{x_{i-1}}^{x_i} \big| f(x_i) - f(x) \big| \tilde \rho_*(x) \, dx
  \leq C \sum_{i=I+1}^J \int_{x_{i-1}}^{x_i} | x_i - x_{i-1} | \tilde \rho_*(x) \, dx \\
  \leq C \sum_{i=I+1}^J \Big( \frac{ \gamma_n }n \Big)^2
  \leq C' \frac{ \gamma_n^{3/2} }n
  \xto{n \to \infty} 0
\end{multline*}
and, recalling \eqref{pf:19},
\begin{align*}
  &\sum_{i=J+1}^n \int_{x_{i-1}}^{x_i} \big| f(x_i) - f(x) \big| \tilde \rho_*(x) \, dx
  \leq \sum_{i=J+1}^n \int_{x_{i-1}}^{x_i} 2 \|f\|_{L^\infty(x_{i-1}, x_i)} \tilde \rho_*(x) \, dx \\
  &\leq \sum_{i=J+1}^n 2 \frac{ \gamma_n }n V (x_{i-1} - x_I) \int_\Omega |\nu|
  \leq C \sum_{i=J+1}^n c \frac{ \gamma_n }n V \Big( c \frac{ \gamma_n }n (i - 1 - I) \Big) \\
  &\leq C \int_{c (J - I - 1) \gamma_n / n }^\infty V
  \leq C \int_{c' \sqrt{\gamma_n} }^\infty V
  \xto{n \to \infty} 0.
\end{align*}
Hence, the limsup of the second part of \eqref{pf:18c} is nonpositive. We conclude that the limsup of \eqref{pf:18c} is nonpositive.

Finally, for \eqref{pf:18e}, we observe that the inner integral
\[
  \int_{(x_n, \infty)} V ( x - y ) \, d \nu_n(y)
  = - (V * \tilde \rho_* |_{(x_n, \infty)})(x)
\]
is nonpositive and non-increasing on $[0, x_n]$. Then, using a similar argument as for \eqref{pf:18c} (simplifications are possible), we conclude that the limsup of \eqref{pf:18e} is nonpositive. This completes the proof of \eqref{pf:20}.

In conclusion, we have proved that 
\[
  \limsup_{n \to \infty} F_n(\nu_n)
  \leq \frac12 \int_{\Omega} (V^\beta * \nu) d \nu + C \beta^{1-a} - \rhomin(0) \int_\Omega g \, d\nu
\]
for all $\beta > 0$ small enough. Since $\nu \in L^2(\R)$ and  $V^\beta \to V$ in $L^1(\R)$ as $\beta \to 0$, we conclude the limsup inequality in Theorem \ref{t} by taking $\beta \to 0$ and applying \eqref{Vff:eq:TfL2}.

It is left to treat the cases $\rho_*(0) = 0$ and $\int_\Omega \nu < 0$. Since $\rho_*(0) = 0$ implies $\nu \geq 0$, these two cases are mutually exclusive. We start with the case $\rho_*(0) = 0$. We follow the proof for the case $\rho_*(0) > 0$ with minor modifications. By density, instead of assuming $\nu^- \leq \rho_*(0) - \delta$, we may assume that $\nu \geq \psi^\delta$ on $\Omega$, where $\psi^\delta$ is a monotone cut-off functions which equals $\delta$ on $[0, M-\delta]$ and $0$ on $[M,\infty)$. Under this assumption, \eqref{pf:41} does not hold for large $i$, and thus we need to construct a different argument for \eqref{pf:16} and for showing that the limsup of \eqref{pf:18c} is nonpositive. 

To prove \eqref{pf:16}, we take a test function $\varphi \in C_c(\Omega)$ and set $N := \max \supp \varphi$. From the proof of \eqref{pf:16} we observe that $\nu \geq \psi^\delta$ on $\Omega$ implies that $\int_{[0,M]} \varphi \, d(\nu_n - \nu) \to 0$ as $n \to \infty$. For the integral over $[M,\infty)$, we note from the Lipschitz continuity of $\rho_*$ and $\rhomin(0) = 0$ that
\[
  \int_M^N \tilde \rho_* 
  \leq \int_0^N \rho_* \Big( \frac x{\gamma_n} \Big) \, dx 
  \leq C \int_0^N \frac x{\gamma_n} \, dx
  \leq \frac{C'}{\gamma_n} 
  \xto{n \to \infty} 0.
\]
Then, by \eqref{pf:15},
\[
  \int_{(M,N]} d \nu_n^+ 
  \leq \frac{\gamma_n}n + \int_M^N \tilde \rho_*
  \xto{n \to \infty} 0.
\]
Hence,
\[
  \int_{(M,\infty)} \varphi \, d(\nu_n - \nu)
  = \int_{(M,N]} \varphi \, d(\nu_n^+ - \tilde \rho_*)
  \leq \| \varphi \|_\infty \int_{(M,N]} d(\nu_n^+ + \tilde \rho_*)
  \xto{n \to \infty} 0.
\]
We conclude \eqref{pf:16}.

Next we show that the limsup of \eqref{pf:18c} is nonpositive. We can follow the proof for the case $\rho_*(0) > 0$ up to the term $\int_{(x_I, x_n]} f \, d\nu_n$ in \eqref{pf:42}. Since $\rho_*(0) = 0$ implies $\nu \geq 0$, we observe that $f = V*\nu$ is nonnegative and non-increasing on $(x_I, \infty)$. Hence, by a similar argument as that in \eqref{pf:38}, it follows that $\int_{(x_I, x_n]} f \, d\nu_n \leq 0$. This concludes the proof of the limsup inequality in Theorem \ref{t} in the case $\rho_*(0) = 0$.

It is left to prove the limsup inequality for the case $\int_\Omega \nu < 0$. We largely follow the proof for the case $\int_\Omega \nu \geq 0$, and focus on the modifications. For the choice of $\nu_n$, we define $x_0, \ldots, x_J$ as in \eqref{pf:15}, where $J$ is the smallest integer at which
\[
  \int_{x_J}^\infty \tilde \rho_* \leq \frac{\gamma_n}n.
\]
For $i > J$ we set
\begin{equation} \label{pf:165}
  x_i = x_J + (i-J) \frac{\gamma_n^{3/2}}n,
\end{equation}
and take $\nu_n$ as in \eqref{pf:3}. Clearly, properties \eqref{pf:19}, \eqref{pf:17} and \eqref{pf:16} are still satisfied. To obtain the discrete equivalent of \eqref{pf:30}, we compute
\begin{align*}
  \frac{\gamma_n}n J
  = \nu_n^+([0, x_J))
  = \int_{[0, x_J)} (\nu + \tilde \rho_*)
  = \int_\Omega \nu + \int_\Omega \tilde \rho_* - \int_{x_J}^\infty \tilde \rho_*
  \geq \int_\Omega \nu + \gamma_n - \frac{\gamma_n}n.
\end{align*}
Hence, 
\begin{equation} \label{pf:39}
  n - J 
  \leq 1 + \frac n{\gamma_n} \bigg| \int_\Omega \nu \bigg|.
\end{equation}
Finally, instead of \eqref{pf:24}, we now have
\begin{equation} \label{pf:25}
  x_i \in \supp \tilde \rho_i 
  \qquad \text{for all } i = 0,\ldots,J.
\end{equation}

Similarly to the previous case, we pass to the limit $n \to \infty$ in all five terms in \eqref{Fn:full:form} separately. By Lemmas \ref{l:Vgam:err:term} and \ref{l:force:term} we obtain the same limit for the second, third and fifth term. Using \eqref{pf:25}, the fourth term equals
\begin{align*}
  \int_{(\supp \tilde \rho_*)^c} \big( U(x / \gamma_n) - C_U \big) \, d \nu_n^+(x) 
  \leq \frac{\gamma_n}n \bigg( C_U  + \sum_{i = J+1}^n \big[ U(x_i / \gamma_n) - C_U \big]^+ \bigg). 
\end{align*}
To bound the sum in the right-hand side, note from $x_J \in \supp \tilde \rho_*$ that
\[
  \sum_{i = J+1}^n \big[ U(x_i / \gamma_n) - C_U \big]^+
  \leq \sum_{i = J+1}^n \big( U(x_i / \gamma_n) - U(x_J/\gamma_n) \big).
\]
Since by \eqref{pf:39}
\[
  \frac{x_i}{\gamma_n} - \frac{x_J}{\gamma_n}
  = (i-J) \frac{\sqrt{ \gamma_n }}n
  \leq \frac1{\sqrt{ \gamma_n }}
  \xto{n \to \infty} 0,
\]
it  follows from $U \in C^1(\R)$ that the summand vanishes as $n \to \infty$. Hence, the fourth term in \eqref{Fn:full:form} also vanishes as $n \to \infty$.

We treat the first term in \eqref{Fn:full:form} similarly as in \eqref{pf:20} and \eqref{pf:18}. The modifications to \eqref{pf:18} are as follows. First, we replace $x_n$ by $x_J$ in all integration domains. Then, we replace in \eqref{pf:18e} and \eqref{pf:18f} $\nu_n$ by $-\tilde \rho_*$ in the integrals over $(x_J, \infty)$. Finally, we add two new terms (see \eqref{pf:31}) to account for $x_{J+1}, \ldots, x_n$. With these modifications, \eqref{pf:18a}--\eqref{pf:18f} can be treated analogously. The two new terms that need to be added are
\begin{align} \label{pf:31}
  2 \Big( \frac{\gamma_n}n \Big)^2 \sum_{i = J+1}^n \sum_{j = J+1}^{i-1} V(x_i - x_j)
  + 2 \frac{\gamma_n}n \sum_{i = J+1}^n \int_{[0, x_J]} V(x_i - y) \, d\nu_n(y).
\end{align}
Using \eqref{pf:165} and \eqref{pf:39} we estimate the first term by
\[
  \Big( \frac{\gamma_n}n \Big)^2 \sum_{i = J+1}^n \sum_{j = J+1}^{i-1} V(x_i - x_j)
  \leq \Big( \frac{\gamma_n}n \Big)^2 \sum_{i = J+1}^n \sum_{k = 1}^{n - J} V(k \gamma_n^{3/2} / n)
  \leq C \frac{\gamma_n}n \sum_{k = 1}^{n - J} V(k \gamma_n^{3/2} / n).
\]
For the second term in \eqref{pf:31}, we apply a similar argument as for \eqref{pf:18c}. This yields
\[
  \frac{\gamma_n}n \sum_{i = J+1}^n \int_{[0, x_J]} V(x_i - y) \, d(\nu_n - \nu)(y)
  \leq \frac{\gamma_n}n \sum_{i = J+1}^n V(x_i - x_J)
  = \frac{\gamma_n}n \sum_{k = 1}^{n-J} V(k \gamma_n^{3/2} / n)
\]
and
\[
  \frac{\gamma_n}n \bigg| \sum_{i = J+1}^n \int_{[0, x_J]} V(x_i - y) \nu(y) \, dy \bigg|
  \leq C V(x_{J+1} - x_I) \int_\Omega |\nu|
  \leq C V(c \gamma_n)
  \xto{n \to \infty} 0.
\]
Hence, \eqref{pf:31} is bounded from above by
\[
  C \frac{\gamma_n}n \sum_{k = 1}^{n-J} V(k \gamma_n^{3/2} / n)
  \leq \frac C{\sqrt{ \gamma_n }} \sum_{k = 1}^\infty \frac{ \gamma_n^{3/2} }n V \Big( k \frac{ \gamma_n^{3/2} }n \Big) 
  \leq \frac C{\sqrt{ \gamma_n }} \int_0^\infty V
  \xto{n \to \infty} 0.
\]
This completes the proof of the limsup inequality in Theorem \ref{t}.

\subsection{The case $a=0$}
\label{s:t:pf:a0}

Here we prove Theorem \ref{t} for the case $a = 0$. The proof is the same as for the case $0 < a < 1$, except for minor computational modifications. All these modifications are ramifications from the difference in the bound on $V$ in Assumption \ref{a:V}\ref{a:V:sing}, which in the current case $a = 0$ produces logarithms. The ramifications in the preliminary estimates are the statement of Lemma \ref{l:Vbeta}\ref{l:Vbeta:L1Linf}, which changes into
\begin{align} \label{a0:z}
  V^\beta(0) \leq  C |\log \beta| 
  \qand
  \int_{\R} W^\beta \leq C \beta |\log \beta|,
\end{align}
and the statement of Lemma \ref{l:Tbeta}\ref{l:Tbeta:TTbetag:bd}. By observing that $\int_0^\beta x^j |\log x| \, dx \leq C \beta^{j+1} |\log \beta|$, it follows from the proof of Lemma \ref{l:Tbeta}\ref{l:Tbeta:TTbetag:bd} that the corresponding statement becomes
\begin{align} \label{a0:y}
    \| (T - T^\beta) f \|_{X_{1,2}} 
    \leq C_f \beta |\log \beta|.
\end{align} 

In the compactness proof, by taking again $\beta_n = \gamma_n/n$, we observe from \eqref{a0:z} that the term $\e_n$ becomes
\[
  \e_n = C \frac{\gamma_n^2}n \Big| \log \frac{\gamma_n}n \Big|.
\]
Applying the asymptotic bound on $\gamma_n$ in Theorem \ref{t}, we obtain
\[
  \e_n 
  \ll C \frac1{\log n} ( \log \sqrt n + \log \sqrt{\log n} )
  \leq C,
\]
which is sufficient for continuing the argument in the proofs for the compactness and the liminf inequality.

For the liminf inequality, we only need that \eqref{a0:y} vanishes as $\beta \to 0$, which is obvious. For the limsup inequality, the bounds in \eqref{a0:z} yield that \eqref{pf:18b} is bounded by $C \beta |\log \beta|$. Hence, we may replace the term $C \beta^{1-a}$ in \eqref{pf:20} by $C \beta |\log \beta|$. Also, the bound in \eqref{pf:22} changes by \eqref{a0:z} into
\[
  \int_{x_{i-1}}^{x_i} V(x - y) \tilde \rho_*(y) dy
  \leq C \int_0^{\gamma_n / (2 n \| \rho_* \|_\infty)} |\log z| \, dz
  = C' \frac{\gamma_n}n \Big| \log \frac{\gamma_n}n \Big|.
\]
From this estimate, we obtain for the term $T_3$ in \eqref{pf:23} that 
\[
  T_3 
  \leq C \frac{\gamma_n^2}n \Big| \log \frac{\gamma_n}n \Big|
  = C' \e_n
  \xto{n \to \infty} 0,
\]
which is sufficient for the proof of the limsup inequality.

\section*{Acknowledgements}

The author gratefully acknowledges support from JSPS KAKENHI Grant Number JP20K14358.

\end{document}